\newcommand{\R}{\mathbb{R}}
\newcommand{\C}{\mathbb{C}}
\newcommand{\N}{\mathbb{N}}
\newcommand{\del}{\partial}
\newcommand{\abs}[1]{\left\lvert#1\right\rvert}
\newcommand{\dd}[2]{\frac{\del #1}{\del #2}}   
\newcommand{\define}[1]{\textsl{#1}}
\newcommand{\gap}{\vspace{2ex}}
\newcommand{\inv}{^{-1}} 
\theoremstyle{definition}
\newtheorem{thm}{Theorem}
\newtheorem{theorem}[thm]{Theorem}
\newtheorem{prop}[thm]{Proposition}
\newtheorem{lemma}[thm]{Lemma}
\newtheorem{defn}[thm]{Definition}
\newtheorem{rmk}[thm]{Remark}
\newtheorem*{rmkn}{Remark}
\newtheorem*{thmn}{Theorem}
\theoremstyle{remark}
\newcommand{\J}{\mathcal{J}}
\newcommand{\LL}{\mathbb{L}}
\newcommand{\hf}{\tfrac{1}{2}}
\newcommand{\BS}{Bohr-Sommer\-feld}
\newcommand{\lf}{\ell}
\newcommand{\sniat}{\'Sniatycki}
\newcommand{\U}{\mathfrak{U}}
\newcommand{\V}{\mathfrak{V}}
\DeclareMathOperator{\hol}{hol}
\begin{document}

\title{Geometric quantization of integrable systems
with hyperbolic singularities}

\author {Mark D. Hamilton$^1$
 and Eva Miranda $^2$}

\addtocounter{footnote}{1}
 \footnotetext{$^{}$  Graduate School of Mathematical Sciences,
University of Tokyo, 3-8-1 Komaba, Tokyo 153-8914 Japan,
\texttt{umbra@math.toronto.edu}\\ 
Supported by a PIMS Postdoctoral Fellowship and
an Oberwolfach-Leibniz Fellowship}
 \addtocounter{footnote}{1}

 \footnotetext{$^{}$ Departament de
Matem\`{a}tiques,
 Universitat Aut\`{o}noma de Bar\-celona, E-08193 Bellaterra, Spain,
\texttt{miranda@mat.uab.cat} \\  Research supported by a Juan de la
Cierva contract reference number JCI-2005-1712-18 and partially
supported by the DGICYT/FEDER, project number
 MTM2006-04353 (Geometr\'\i a Hiperb\'olica y Geometr\'\i a
 Simpl\'ectica).
 }
 \addtocounter{footnote}{2}

\date{January 21, 2009}

\maketitle

\begin{abstract}
We construct the geometric quantization of a compact surface
using a singular real polarization coming from an integrable system.
Such a polarization always has singularities, which we assume to be of
nondegenerate type.
In particular, we compute the effect of hyperbolic singularities,
which make an infinite-dimensional contribution to the
quantization,
thus showing that this quantization depends strongly on polarization.
\end{abstract}

\tableofcontents

\section{Introduction}\label{s:intro}

In the theory of geometric quantization,
the ``quantization'' of a symplectic manifold $M$
is constructed from sections of a complex line bundle over $M$.
The ingredients for geometric quantization are as follows: a symplectic
manifold $(M, \omega)$, a complex line bundle $\LL$ over $M$, and a
connection $\nabla$ on $\LL$ whose curvature is $\omega$.
We also require a  \emph{polarization,} which is an integrable
complex Lagrangian distribution
(see~\cite{woodhouse} for more information).
A \emph{real polarization} is given by a foliation of
$M$ into Lagrangian submanifolds.
If $\J$ is the sheaf of sections of $\LL$ that are
covariant constant (with respect to $\nabla$) in the directions tangent
to the leaves of the foliation,
then the
\emph{quantization} of $M$ is
\[ \mathcal{Q}(M) = \bigoplus_{k\geq 0} H^k(M;\J), \]
where $H^*(M;\J)$ is the cohomology of $M$ with coefficients
in $\J$.\footnote{Some authors,
particularly those who take an index theory approach to quantization
(e.g.~\cite{ggk})
define the quantization as the alternating sum
of cohomology groups, rather than the straightforward sum as we do here.
However, as we will show, all but one of these groups are zero,
and so it does not really matter which definition we take.
Guillemin and Sternberg in~\cite{guilleminandsternberg} avoid this question
altogether and say merely that ``the main objects of interest are
the cohomology groups  $H^k(M;\J)$.''}

The main result about quantization using real polarizations is a
theorem of \'Sniatycki~\cite{sniatpaper} from 1975: If the leaf
space $B^n$ is a manifold and the map $\pi \colon M^{2n} \to B^n$ a
fibration with compact fibres, then all of these cohomology groups
are zero except in degree $n$. Furthermore, $H^n$ can be expressed
in terms of Bohr-Sommerfeld leaves. A \emph{Bohr-Sommerfeld} leaf is
one on which is defined a global section which is flat along the
leaf (see Definition~\ref{d:bs}). The set of \BS\ leaves is
discrete, and \'Sniatycki's result says that the dimension of $H^n$
is equal to the number of Bohr-Sommerfeld leaves. (It actually
applies to non-compact manifolds as well, in which case the non-zero
cohomology is in degree equal to the rank of a fibre of $\pi$.
However, in this paper we only consider the compact case.)

The hypothesis that $B^n$ be a manifold is quite restrictive, however.
For example, in a completely integrable system, by the Arnol'd-Liouville
theorem the fibres of the moment map are generically Lagrangian tori,
but there may be fibres which have smaller dimension or are not manifolds.
This is like a real polarization except for the singularities, and so we view
it as a singular real polarization and extend the quantization machinery
to this case.

A local classification of the types of nondegenerate singularities
appearing in integrable systems has been established by Eliasson and
the second author in~\cite{eliassonthesis, eliassonelliptic, evathesis}.
It has as starting point the algebraic classification
due to Williamson~\cite{wi} of Cartan subalgebras of the Lie
algebra of the symplectic group,
and is given in terms of a local model for the
components of the moment map near the singularity.
Singularities can be written as a
product of three basic types, which are called \emph{elliptic,
hyperbolic,} and \emph{focus-focus.}

In~\cite{mhthesis}, the first author computed the quantization of
systems with only elliptic singularities.
The result obtained was similar to \sniat 's: all cohomology groups are
zero except in degree $n$, and $H^n$ has dimension equal to
the number of \BS\ leaves.
However, the singular \BS\ leaves do not make a contribution to the
cohomology and are not included in this count.

A natural question, then, is what are these cohomology groups
for a system with the other types of singularities?
This paper addresses the case of hyperbolic singularities in two dimensions.
We plan to return to the focus-focus case, and the general case of
singularities of mixed types, in a future paper.
Note that this paper completes the case of this quantization
(with respect to singular real polarizations) for compact manifolds of
two dimensions,
since focus-focus components can only appear in dimensions four or higher.

The main result of this paper (Theorem~\ref{mainthm}) is:
\begin{thmn}
Let $(M, \omega, F)$ be a two-dimensional, compact, completely integrable
system, whose moment map has only nondegenerate singularities.
Suppose $M$ has a prequantum line bundle $\LL$,
and let $\J$ be the sheaf of sections of $\LL$ flat along the leaves.
The cohomology $H^1(M,\J)$ has two
contributions of the form $\C^\N$ for each hyperbolic singularity,
each one corresponding to a space of Taylor series in one complex variable.
It also has one $\C$ term for each non-singular Bohr-Sommerfeld leaf.
That is,
\begin{equation}\label{eq:mainthmn}
H^1 (M;\J) \cong \bigoplus_{p \in \mathcal{H}}
\bigl( \C^\N \oplus \C^\N\bigr)
\oplus \bigoplus_{b\in BS} \C_b .
\end{equation}
The cohomology in other degrees is zero.
Thus, the quantization of $M$
is given by~\eqref{eq:mainthmn}.
\end{thmn}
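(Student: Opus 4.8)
The plan is to compute $H^*(M;\J)$ by a \v{C}ech/Mayer--Vietoris argument adapted to the singular foliation, reducing everything to the local models furnished by the Eliasson normal form. First I would dispose of the degrees $\ne 1$. Since $M$ is a surface the leaves are one-dimensional; pushing $\J$ forward to the leaf space and using the Leray spectral sequence, the base is one-dimensional and every leaf (a circle, a point, or a figure-eight) has cohomological dimension $\le 1$, so $H^k(M;\J)=0$ for $k\ge 3$, and the vanishing of $H^2$ will come out of the local computations below. For $H^0$: a global section flat along the leaves restricts on each regular leaf to a flat section of $\LL$ over a circle, which vanishes unless that leaf is \BS; the non--\BS\ regular leaves are dense in $M$ and sections are continuous, so $H^0(M;\J)=0$. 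Thus the theorem is a statement about $H^1$.

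Next I would choose a cover of $M$ by three types of open sets: saturated tubular neighbourhoods $V_j$ of the regular part, which fibre over intervals with circle fibres; small neighbourhoods $U_e$ of the elliptic points; and neighbourhoods $W_p$ of the figure-eight leaves $L_0$ through the hyperbolic points $p\in\mathcal{H}$, chosen pairwise disjoint and disjoint from the $U_e$. On the $V_j$, \sniat's theorem~\cite{sniatpaper} gives $H^0=0$, $H^2=0$, and $H^1\cong\bigoplus\C_b$, one summand per \BS\ leaf inside $V_j$; on the $U_e$ the computation of~\cite{mhthesis} shows the elliptic point contributes nothing and again $H^1\cong\bigoplus\C_b$. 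Every pairwise overlap lies in the regular region, where the transition data is the clean \sniat\ picture — a skyscraper-type behaviour supported on the discrete set of \BS\ leaves — so the \v{C}ech-to-derived-functor spectral sequence degenerates, the degree-$2$ part stays zero, and the $\C_b$'s assemble unambiguously into $\bigoplus_{b\in BS}\C_b$. Everything therefore reduces to computing $H^*(W_p;\J)$ and observing that it is simply added to this sum.

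For the hyperbolic local computation I would use the Eliasson normal form to identify $(W_p,\omega,F)$ near $p$ with the model $(\R^2,\,dx\wedge dy,\ F=xy)$; the prequantum bundle and connection are standard there since the model is contractible. The Hamiltonian vector field $X_F$ of $F=xy$ is $x\,\del_x - y\,\del_y$, and solving $\nabla_{X_F}f=0$ along characteristics shows that a leafwise-flat section has, on each of the four open ``quadrant'' sectors, the form $f = g(xy)\, e^{\,i\,xy\ln x}$ in a suitable gauge, with $g$ an arbitrary function of the single leaf parameter $c=xy$. The oscillatory factor is continuous but fails to be $C^\infty$ along the separatrices and at the saddle, and one checks that a flat section is smooth at the origin exactly when $g$ is flat there. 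Splitting $W_p = (\text{small disk about }p)\cup(W_p\setminus\{p\})$ and running Mayer--Vietoris, the cokernel of ``smooth flat germs at $p$'' inside ``flat sections on the punctured disk'' is precisely the formal Taylor expansion of $g$ at $c=0$, i.e.\ a copy of $\C[[c]]\cong\C^\N$. Assembling this over the singular leaf $L_0$, which is a wedge of two circles (so $b_1(L_0)=2$), the two loops contribute independently and one obtains $H^1(W_p;\J)\cong\C^\N\oplus\C^\N$, a space of Taylor series in one variable for each loop, together with $H^0(W_p;\J)=0$ and $H^2(W_p;\J)=0$. Since $W_p$ meets the rest of $M$ only along regular leaves, this summand is added to, and cannot cancel against, the \BS\ contributions, giving~\eqref{eq:mainthmn}.

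The main obstacle is this last computation: analysing the ``wild'' (non-smooth, essentially singular) behaviour of the flat sections along the separatrices and at the saddle, and keeping track of the non-Hausdorff leaf space of the figure-eight carefully enough to see that exactly two copies of $\C^\N$ survive — in particular that the holonomy constraints imposed by the nearby regular circles cut any a priori larger space of local flat sections down to precisely these two Taylor-series factors. A secondary point requiring care is the vanishing of $H^2$: one must know that the relevant pushforward sheaves on the one-dimensional leaf space are, away from finitely many points, concentrated on the discrete \BS\ set, so that no degree-two classes can appear anywhere in the cover.
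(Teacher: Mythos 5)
Your global strategy (Eliasson local model, \sniat\ on the regular part, the elliptic result of~\cite{mhthesis}, and a Mayer--Vietoris assembly, with $H^0$ killed by discreteness of \BS\ leaves) is the same as the paper's, but the step that actually carries the theorem is missing. The claim $H^1(W_p;\J)\cong\C^\N\oplus\C^\N$ is asserted from two heuristics --- a Mayer--Vietoris for $W_p=(\text{disk about }p)\cup(W_p\setminus\{p\})$, and ``$b_1$ of the figure-eight is $2$, so the two loops contribute independently'' --- and neither is justified. The Mayer--Vietoris principle available for the sheaf $\J$ (from~\cite{mhthesis}) applies to decompositions into unions of leaves whose intersection consists of regular leaves; a disk-minus-a-point decomposition is not of that form, and it is not clear the sequence even exists there. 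More importantly, identifying the cokernel of germs at $p$ with $\C[[c]]$ does not by itself determine how many independent copies of $\C^\N$ survive in $H^1$ of the whole saturated neighbourhood: at the saddle there are \emph{four} independent quadrant components of a flat section (each forced to be Taylor flat at the singular leaf), and the actual count comes from tracking, loop by loop, which combinations of cocycle data are forced to be Taylor flat and which of those conditions are mutually independent. In the paper this is an explicit \v Cech computation with parallel transport: the two inner loops each force one combination $a-c\tau$ to be Taylor flat (giving $\C^\N$ each, surjectivity via Borel's theorem), while the analogous condition from the outer loop turns out to be a consequence of the inner two because Taylor expansions agree from either side of the singular leaf --- exactly the cancellation your ``independence of the two loops'' glosses over, and exactly what you yourself flag as ``the main obstacle.''

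There is also a structural gap against the statement as given: you assume each hyperbolic point sits on its own figure-eight leaf with pairwise disjoint neighbourhoods $W_p$. The theorem allows several hyperbolic points on a single singular leaf (the ``triple-eight'' and ``double-lung'' of \S\ref{s:manysing}), where your loop-counting heuristic fails: a connected singular leaf with $k$ saddle points has first Betti number $k+1$, whereas the correct contribution is $2k$ copies of $\C^\N$ --- one for each separatrix arc joining singular points, since an arc with a hyperbolic cross at \emph{each} end forces both cocycle components on it to be Taylor flat (giving $(\C^\N)^2$ from such an arc, as in the middle loop of the triple-eight). So for $k\ge 2$ a count based on independent loops of the singular leaf undercounts the cohomology, and the reduction to disjoint figure-eight neighbourhoods does not cover the general hypothesis. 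Finally, your vanishing of $H^2$ is deferred to ``the local computations below'' via a Leray argument that is never carried out; the paper instead gets it directly from a cover with no triple intersections, together with the refinement argument of \S\ref{refcov} showing the cover computes the actual sheaf cohomology --- some substitute for that last point is also needed in your set-up.
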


We follow the methods of~\cite{mhthesis}, dividing the manifold up
into open sets and computing the cohomology of each set
individually, and then piecing them together using a Mayer-Vietoris
argument. The case of neighbourhoods of regular leaves is covered by
the theorems in~\cite{sniatpaper} and~\cite{mhthesis}, so we
concentrate on a neighbourhood of a singular leaf, where we compute
the cohomology groups using a \v Cech approach.\footnote{
Other authors, including \'Sniatycki~\cite{sniatpaper} 
and Rawnsley~\cite{rawnsley},
have used an approach based on an abstract de Rham theorem, 
using a resolution of the sheaf $\J$ to compute the cohomology.
One of the main issues of this approach is to prove the resolution is fine, 
which requires a Poincar\'{e} lemma adapted to the polarization
(see for instance \cite{rawnsley}). 
Such a lemma, for the case when the polarization has nondegenerate 
singularities, has been 
proved by the second author and San V\~u Ng\d oc
in \cite{evasan}; this result could be applied to prove that a similar 
resolution applies to our situation.
However, \'Sniatycki's computation in the regular case 
strongly uses the existence of action-angle coordinates in a 
neighbourhood of the whole fibre 
(although he does not use the term ``action-angle'').
When the polarization is singular, ``singular
action-angle coodinates''  do not, in general, exist in a whole
neighbourhood of the singular fibre, but only on a
neighbourhood of the singular point (see~\cite{toulet}), 
and so we would still have to divide up a neighbourhood of a singular 
leaf up into pieces, deal with each piece separately, and 
then fit them back together again.  
For this reason we find it simpler 
to just work with \v Cech cohomology directly.}

One of the issues in geometric quantization is ``independence of
polarization,'' the question of
whether different polarizations give equivalent quantizations.
When we allow singularities in the polarization, we find that the
quantization depends strongly on the polarization,
in the sense that we can easily
introduce new hyperbolic singularities by 
using surgery of integrable systems
(see~\S\ref{s:hopf}).
We also give explicit examples coming from mechanics of two different
systems on a sphere with different quantizations:
rotation about the vertical axis, and the Euler equations on the sphere.
The first one has no hyperbolic singularities, while the second one has
two, giving four infinite-dimensional contributions to the quantization.

The organization of this paper is as follows:
We review definitions and terminology in section~\ref{s:def},
and prove some properties of the sheaf of flat sections in
section~\ref{s:flatsec}.
The cohomology computation for the simplest hyperbolic system
is carried out in sections~\ref{cohom-setup} and~\ref{ugly}, and
extended to more complicated leaf structures in~\ref{s:manysing}.
In section~\ref{s:hopf} we describe the surgery of integrable
systems and give two examples from mechanics with different polarizations
having different quantizations.
Finally, section~\ref{refcov} contains a technical proof
having to do with \v Cech cohomology.

\subsection{Acknowledgements}
The acknowledgements part in this paper
deserves its own subsection. During the process of working on this
project, the authors have been substantially helped by
many people along the way.
First, a big thanks must go to Victor Guillemin
who has helped us a lot with this problem,
and has enthusiastically followed up on its progress.
We are also extremely grateful to Yael Karshon for the many helpful
conversations and suggestions during our visit to Toronto during the
early stages of this project.
Many thanks also to both Victor and Yael for the
invitations to Boston and Toronto which made an important contribution
to our work.

We are very grateful to Mathematisches Forschungsinstitut Oberwolfach for
the opportunity to work on this project in the beautiful settings of
the Institute.  Oberwolfach has provided a perfect
working atmosphere in the fantastic Black Forest, which gave a beautiful
backdrop to our sometimes messy calculations.

Thanks to Jerrold Marsden and Tudor Ratiu who kindly
provided us with the lovely picture of the Euler equations 
(Figure~\ref{euler}) in section 7.2.

Last but not least, we want to thank Roger and
Tess from 9 Baldwin Street in Toronto for their hospitality during
the early stages of this project.

\section{Definitions}\label{s:def}

\subsection{Integrable systems}
Let $(M^{2n},\omega)$ be a symplectic manifold of dimension $2n$.
The Poisson bracket is defined on $M$ by
$\{f_i,f_j\}=\omega(X_{f_i}, X_{f_j})$
where $X_{f_j}$ is the Hamiltonian vector field of $f_j$.
A \define{completely integrable system} is given by a set of $n$
functions $f_1,\dots, f_n$ which Poisson commute and which are
generically functionally independent.

Since $0 = \{f_i,f_j\} = \omega(X_{f_i}, X_{f_j})$
and $[X_{f_i},X_{f_j}] = X_{\{f_i,f_j\}} = 0$,
the distribution
generated by the Hamiltonian vector fields of the functions $f_i$ is
involutive and the regular integral manifolds are Lagrangian
submanifolds of $(M^{2n}, \omega)$.

 The collection of functions
$F=(f_1,\dots,f_n)$ is often called
the \emph{moment map} in the literature of integrable
systems.  Observe that when the manifold
is compact, the moment map $F$ has singularities, which
correspond to singularities of the distribution by
Hamiltonian vector fields. A whole theory has been developed (and is
still being developed) for the singularities of this mapping and the
symplectic invariants attached to them. In the case that the
singularities are non-degenerate (in the sense of
\cite{eliassonelliptic}), there is a local symplectic Morse theory for
these systems (see \cite{eliassonthesis} and \cite{evathesis}).

If $(M,\omega)$ is two-dimensional, a completely integrable
system is just a function $F \colon M \to \R$.  In this case,
a non-degenerate singular point $p$ is a point where $d_p F=0$ and
the Hessian $d_p^2 F$ is non-degenerate.
There are only two types of non-degenerate singularities for
integrable systems in dimension 2: \emph{hyperbolic} (when the Hessian is
indefinite) or 
\emph{elliptic} (when the Hessian is positive or negative definite).

The following theorem is due to Colin de Verdi\`ere and
Vey~\cite{colinvey}, and is a special case in two dimensions
of more general results by Eliasson and the second author 
(\cite{eliassonthesis, eliassonelliptic, evathesis}). It
gives a symplectic local model for a neighbourhood of the singularity.

\begin{theorem}\label{eliasson-coords}
Let $F\colon(M^2,\omega)\longrightarrow \mathbb R$ be a function and let
$p$  be a non-degenerate singular point of $F$. Let $Q$ be the
quadratic form corresponding to the Hessian of $F$ at $p$.

Then there exists a local diffeomorphism from a neighbourhood $Z$ of
$p$ to a neighbourhood of $0$ in $\R^2$ taking $\omega$ to the
symplectic form $dx\wedge dy$ and $F$ to a function $\phi(Q)$. If
the hessian $Q$ is positive definite the germ of the function $\phi$
characterizes the pair $(F,\omega)$. If $Q$ is not definite then the
jet at the point $p$ of the function $\phi$ characterizes the pair
$(F,\omega)$.
\end{theorem}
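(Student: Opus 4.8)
The plan is to reduce the statement to two standard ingredients: a symplectic normal-form argument near the singular point, and the known structure theory for nondegenerate singularities of functions (i.e. the Morse and Morse–Bott lemmas, suitably adapted to remember the symplectic form). Since we are in dimension two, the classification of the quadratic part $Q$ is dictated by Williamson's theorem: after a linear symplectic change of coordinates, $Q$ is either $\tfrac{1}{2}(x^2+y^2)$ (elliptic case), $\tfrac{1}{2}(x^2-y^2)$, or $xy$ (hyperbolic case, the last two being linearly equivalent). So the first step would be to put the Hessian into one of these model forms by a linear symplectomorphism, which is purely algebraic.

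Next I would establish the existence of the diffeomorphism. The idea is to find symplectic coordinates $(x,y)$ in which $F$ depends only on $Q(x,y)$. Concretely, one looks for a function $g=g(F)$ such that the Hamiltonian flow of $g$ is periodic (elliptic case) or such that $g$ generates the appropriate hyperbolic $\R$-action; equivalently, one wants to straighten the foliation by level sets of $F$ so that it coincides with the foliation by level sets of the model $Q$. In the elliptic case this is essentially an equivariant Darboux / Morse lemma with an $S^1$-symmetry (the flow of $X_F$ is periodic near a nondegenerate minimum or maximum), and one obtains that $F=\phi(Q)$ with $\phi$ a smooth germ, with $\phi$ determined up to the obvious freedom. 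In the hyperbolic case the flow is not periodic, and here I would invoke the normal-form results of Colin de Verdière–Vey, and of Eliasson and the second author, which give exactly this: a symplectomorphism carrying $F$ to a function of $Q=xy$. The cited references \cite{colinvey, eliassonthesis, eliassonelliptic, evathesis} do the heavy lifting; the content of the theorem as stated is really the \emph{packaging} of their results in the two-dimensional case, so a proof here consists largely of quoting them and checking that the hypotheses match (nondegeneracy of $d_p^2F$, $d_pF=0$).

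For the uniqueness / classification statement, I would argue as follows. Two pairs $(F,\omega)$ and $(F',\omega')$ with the same model $Q$ are symplectically equivalent near $p$ iff there is a local symplectomorphism conjugating $F$ to $F'$. Writing both as $\phi(Q)$ and $\phi'(Q)$ in Eliasson coordinates, such a symplectomorphism must preserve the foliation by levels of $Q$, hence (after analyzing which symplectomorphisms of $(\R^2,dx\wedge dy)$ preserve the level sets of $Q$) it induces a relation between $\phi$ and $\phi'$. In the definite case, the level sets of $Q$ are circles shrinking to a point, the symplectic area enclosed is an invariant, and this forces $\phi=\phi'$ as germs — so the germ of $\phi$ is the complete invariant. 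In the indefinite case, the level set $\{Q=0\}$ is a pair of crossing lines and the symplectomorphisms preserving the foliation are more flexible (one can, e.g., use hyperbolic rotations $(x,y)\mapsto(\lambda x, \lambda^{-1}y)$ and flip the two branches), so only the formal data survive: the Taylor series (jet) of $\phi$ at $0$ is the invariant. I would make this precise by computing the relevant "regularized" period/action integrals around the level sets of $Q$ and showing they recover precisely the germ (definite case) or only the jet (indefinite case) of $\phi$.

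The main obstacle is the hyperbolic uniqueness claim — that only the jet of $\phi$, not its germ, is a symplectic invariant. This is where the flat (non-analytic) freedom enters: one must exhibit, for any smooth function whose Taylor series agrees with a given $\phi$ to infinite order, an explicit local symplectomorphism realizing the equivalence, which amounts to solving a cohomological equation along the singular hyperbola with a flat error term. This is exactly the delicate part of \cite{eliassonthesis,evathesis}, so in practice I would not reprove it but cite it; the honest "proof" of Theorem~\ref{eliasson-coords} is the observation that it is the $n=1$ specialization of those results together with the two-line Williamson classification of $Q$.
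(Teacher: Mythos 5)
The paper gives no proof of Theorem~\ref{eliasson-coords}: it is imported as the two-dimensional ``isochore Morse lemma'' of Colin de Verdi\`ere--Vey \cite{colinvey}, stated as a special case of the results of Eliasson and the second author \cite{eliassonthesis, eliassonelliptic, evathesis}, which is exactly how you treat it. Your supporting sketch (Williamson normal form for $Q$, symplectic straightening of the level foliation, enclosed-area/regularized-action invariants giving the germ of $\phi$ in the definite case and only its jet in the indefinite case, with the delicate flat cohomological equation deferred to the cited works) is consistent with those sources, so your proposal takes essentially the same approach as the paper.
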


\begin{rmk}
As a consequence of this theorem, after putting $Q$ in a canonical form,
we can assume from now on that the foliation in a neighbourhood of a
singular point $p$ corresponding to $0 \in \R^2$ is given by
the vector field
\begin{itemize}
\item $ Y= -y\frac{\partial}{\partial x} + x\frac{\partial}{\partial y}$
when $Q=x^2 + y^2$ ($p$ is elliptic) or
\item $ Y=x\frac{\partial}{\partial x}-y\frac{\partial}{\partial y}$
when $Q=xy$ ($p$ is hyperbolic)
\end{itemize}
and the symplectic form is $\omega=dx\wedge dy$.
We call these $x$-$y$ coordinates ``Eliasson coordinates.''

In the case that $p$ is hyperbolic, we usually take $Z$ to be a 
``hyperbolic cross'' (see Figure~\ref{hypcross}), what Toulet in~\cite{toulet} 
calls an ``\'etoile canonique.''
\end{rmk}

\begin{figure}[h]
\centerline{\includegraphics{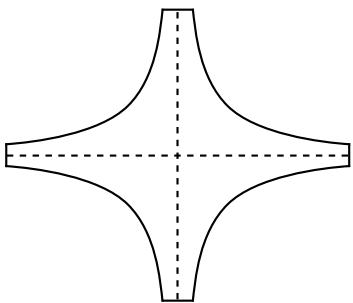}}
\caption{A ``hyperbolic cross''}\label{hypcross}
\end{figure}

\subsection{Geometric quantization}
Let $(M,\omega)$ be a symplectic manifold.
A \define{prequantization line bundle} is a complex line bundle $\LL$
over $M$, equipped with a connection $\nabla$ whose curvature is $\omega$.
A \define{real polarization} is a foliation of
$M$ into Lagrangian submanifolds.
(For a more complete description of geometric quantization,
see~\cite{woodhouse} or~\cite{sniatbook}.)

Suppose $(M,\omega,F)$ is a compact completely integrable system.
We wish to compute the quantization of $M$ using the singular
real polarization given by the singular foliation by levels of $F$,
which (as noted in the Introduction) are generically Lagrangian tori.

\begin{defn}\label{lf-flat}
A section $\sigma$ of $\LL$ is \define{flat along the leaves} or
\define{leafwise flat} if it is covariant constant along the fibres
of $F$, with respect to the prequantization connection $\nabla$.
This means that $\nabla_X \sigma = 0$ for all $X$ tangent to fibres
of $F$. Denote by $\J$ the sheaf of smooth sections which are flat
along the leaves.\footnote{
The fact that the sections are smooth is an important factor in our 
computations.  Another approach to quantization using polarizations with 
singularities would be to consider singular sections, given by distributions 
instead of smooth functions.  We hope to investigate this approach 
in a future paper.}
\end{defn}

\begin{defn}\label{d:qn}
With $(M,\omega,F)$, $\LL$, and $\J$ as above, the \define{quantization}
of $M$ is
\begin{equation*}
\mathcal{Q}(M) = \bigoplus_{k\geq 0} H^k(M;\J).
\end{equation*}
\end{defn}

\begin{rmk}\label{kost}
In the theory of geometric quantization as originally developed independently
by Kostant and Souriau,
the quantum space was the section of ``polarized'' sections of $\LL$
(which correspond to our leafwise flat sections).
However, with a real polarization with compact leaves, there are no
global polarized sections (see the proof of Theorem~\ref{higher-cohom}).
One solution to this problem, suggested by Kostant in~\cite{kostant},
is to look at higher cohomology, which is what we do.
\end{rmk}

A note on terminology: We have two rivals for the term ``flat'' in this
paper.
We will distinguish them by specifying
\emph{leafwise flat} as above,
versus
\emph{analytically flat} as follows:

\begin{defn}
A function is \define{Taylor flat} or \define{analytically flat}
(at some specified point, which is often understood)
if it vanishes to infinite order at that point,
that is, if all of its Taylor coefficients are zero.
\end{defn}

Our results will be expressed in terms of Bohr-Sommerfeld leaves.
\begin{defn}\label{d:bs}
A leaf $\lf$ of the (singular) foliation is a \define{\BS\ leaf}
if there is a leafwise flat section $\sigma$ defined over all of $\lf$.
\end{defn}
Note that, while leafwise flat sections always exist locally
(because the curvature of $\nabla$ is $\omega$, which is zero
when restricted to a leaf),
the condition of existing globally is quite strong.
The set of \BS\ leaves is discrete
(in the leaf space).
Note also that a leaf is \BS\ iff its holonomy is trivial
around all loops contained in the leaf.

\section{The leafwise flat sections}\label{s:flatsec}

We first prove several properties of elements of the sheaf $\J$,
which will be instrumental in what follows.
In particular, sections in $\J$ are analytically flat in particular
ways: see Propositions~\ref{flatatO} and~\ref{fourquad}.
For all of this section (and indeed, the rest of this paper),
$Z$ denotes the neighbourhood of the hyperbolic singular point
given in Theorem~\ref{eliasson-coords}.

\begin{lemma}\label{Thetaform}
We may choose a trivializing section of the
prequantization line bundle $\LL$ over $Z$
so that the potential one-form of the prequantum connection
is $\Theta_0 = \hf ( x\, dy - y\, dx)$ in Eliasson coordinates.
\end{lemma}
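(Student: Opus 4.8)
The plan is to start from the general fact that the prequantization line bundle $\LL$ over a contractible set admits a trivializing section, and then to exploit the freedom in choosing that section (gauge transformations) to bring the connection potential into the desired normal form. Since $Z$ is a hyperbolic cross — in particular simply connected and with vanishing $H^2$ — the bundle $\LL$ restricted to $Z$ is trivial, so we may pick \emph{some} nonvanishing section $s_0$; writing $\nabla s_0 = -i\Theta \tensor s_0$ defines a real one-form $\Theta$ on $Z$ (real because we may take $s_0$ to have constant norm, as the metric connection preserves $\abs{\cdot}$), and the curvature condition forces $d\Theta = \omega = dx\wedge dy$ in Eliasson coordinates.

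Next I would observe that $\Theta_0 = \hf(x\,dy - y\,dx)$ is a fixed primitive of $dx\wedge dy$, so $\Theta - \Theta_0$ is closed on $Z$; since $H^1(Z;\R)=0$ (a hyperbolic cross deformation-retracts to a point), we may write $\Theta - \Theta_0 = df$ for a smooth real function $f$ on $Z$. Then I would replace $s_0$ by the new trivializing section $s = e^{if}s_0$ and compute $\nabla s = e^{if}(if\tensor s_0\,df + \nabla s_0) = -i(\Theta - df)\tensor s = -i\Theta_0 \tensor s$, so that $s$ is a trivializing section whose associated connection potential is exactly $\Theta_0$. This is the section claimed in the lemma.

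The only point requiring any care — and the closest thing to an obstacle — is justifying that $\Theta$ can be taken real, i.e.\ that the connection is compatible with a Hermitian metric on $\LL$; this is part of the standard prequantization setup (one always equips $\LL$ with such a metric and requires $\nabla$ to be metric), so it is essentially a matter of recalling conventions rather than a genuine difficulty. Everything else is the elementary topology of the contractible set $Z$ (vanishing of $H^1$ and $H^2$) together with the gauge-change computation above, which is routine.
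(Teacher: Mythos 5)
Your argument is correct and is essentially the paper's own proof: trivialize $\LL$ over the contractible set $Z$, note that $\Theta-\Theta_0$ is closed hence exact, and regauge the trivializing section by $e^{if}$ so the new potential is $\Theta_0$ (the paper writes $s_0=e^{ig}s$ with $dg=\Theta-\Theta_0$). Your side remark about reality of $\Theta$ is not even needed, since the gauge change works verbatim for a complex primitive, and aside from a small typo in the displayed computation of $\nabla(e^{if}s_0)$ the verification matches the paper's.
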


\begin{rmkn}
Remember that the potential one-form $\Theta$ of a connection,
relative to some trivialization,
is defined as follows:
If $s$ is the trivializing section, and $\sigma = \psi s$ is a section,
then
\begin{equation}\label{connection}
\nabla_X \psi s = \bigl( X(\psi) - i \psi \Theta(X) \bigr) s.
\end{equation}
\end{rmkn}

\begin{proof}[Proof of Lemma \ref{Thetaform}]
Since $Z$ is contractible,
$\LL$ is trivializable over $Z$.
Let $s$ be a trivializing section, and let $\Theta$ be the
potential one-form on $Z$ defined by~\eqref{connection}.
Since the curvature of the connection is $\omega$,
$d\Theta = \omega = d\Theta_0$, and so
$\Theta - \Theta_0$ is closed on $Z$ and, therefore, exact.
Write $dg = \Theta - \Theta_0$, and define a new trivialization $s_0$
of $\LL$ over $Z$ by $s_0 = e^{ig} s$.
Writing a section $\sigma$ as $\psi_0 s_0$, it is easy to check that
\[ \nabla_X \psi_0 s_0 = \bigl( X(\psi_0) - i \psi_0 \Theta_0(X) \bigr) s_0 \]
and so $\Theta_0$ is the potential one-form of $\nabla$ with respect to $s_0$.
\end{proof}

\begin{prop}\label{flatatO}
If $\sigma \colon Z \to \LL$ is a smooth leafwise flat section
defined over $Z$, then $\sigma$ is Taylor flat at the singular point.
That is,
\begin{equation*}
\frac{\del^{j+k}\sigma}{\del^j x\, \del^k y}\biggr\rvert_{(0,0)} = 0
\quad \text{for all } j,k
\end{equation*}
\end{prop}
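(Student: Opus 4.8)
The plan is to work entirely in the Eliasson coordinates and the trivialization provided by Lemma~\ref{Thetaform}, where the hyperbolic leaves are the level sets of $Q = xy$ and the generating vector field is $Y = x\,\partial_x - y\,\partial_y$. Writing $\sigma = \psi s_0$ with $\psi$ a smooth complex-valued function on $Z$, the leafwise flatness condition $\nabla_Y \sigma = 0$ becomes, via~\eqref{connection}, the first-order PDE
\begin{equation*}
Y(\psi) - i\psi\,\Theta_0(Y) = 0,
\end{equation*}
and since $\Theta_0(Y) = \hf(x\,dy - y\,dx)(x\,\partial_x - y\,\partial_y) = \hf(-xy - xy) = -xy$, this reads $x\,\partial_x\psi - y\,\partial_y\psi + i\,xy\,\psi = 0$. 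So the key step is to extract, from this single equation, the vanishing of every mixed partial derivative of $\psi$ at the origin.

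First I would differentiate the equation $x\,\partial_x\psi - y\,\partial_y\psi + ixy\psi = 0$ repeatedly, apply $\partial_x^j\partial_y^k$ via the Leibniz rule, and then evaluate at $(0,0)$. Let me denote $c_{j,k} = \partial_x^j\partial_y^k\psi(0,0)$. Differentiating the operator $x\partial_x$ by $\partial_x^j\partial_y^k$ and evaluating at the origin produces $j\cdot c_{j,k}$ (the only surviving term is when all $\partial_x$'s except one hit the $\partial_x\psi$ factor and one $\partial_x$ hits the $x$); similarly $-y\partial_y$ contributes $-k\cdot c_{j,k}$; and the term $ixy\psi$ contributes $i\,jk\cdot c_{j-1,k-1}$ after the Leibniz expansion (one $\partial_x$ lands on $x$, one $\partial_y$ lands on $y$, the rest on $\psi$). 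Hence for all $j,k \ge 0$ we get the recursion
\begin{equation*}
(j - k)\,c_{j,k} = -\,i\,jk\,c_{j-1,k-1}.
\end{equation*}

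Now I would run the induction on $j + k$. When $j \ne k$, the factor $(j-k)$ is invertible, so $c_{j,k}$ is determined by $c_{j-1,k-1}$, which has lower total order; the base cases $c_{j,0}$ and $c_{0,k}$ (with $j,k \ge 1$) give $j\,c_{j,0} = 0$ and $-k\,c_{0,k} = 0$, forcing those to vanish, and then the off-diagonal terms vanish by descending along the diagonal. The genuinely delicate case is the diagonal $j = k$: there the left side is identically zero, so the recursion gives no information, and one needs a separate argument. Here I would go back to the PDE itself rather than just its jet: because the $Y$-flow expands $x$ and contracts $y$, integrating along the hyperbola through a point near the origin and using that $\psi$ extends smoothly (in particular is bounded) across the singular leaf forces the relevant diagonal coefficients to zero as well — essentially the same mechanism that makes the holonomy-type integral $\int \Theta_0$ blow up, so a bounded flat section cannot have a nonzero diagonal jet. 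I expect \textbf{this diagonal case to be the main obstacle}: the algebraic recursion alone is insufficient, and one must genuinely exploit smoothness of $\sigma$ on all of $Z$ (not merely on a single leaf) together with the hyperbolic dynamics of $Y$. Once the diagonal coefficients are shown to vanish, the off-diagonal ones follow immediately from the recursion, and since $s_0$ is a nonvanishing smooth section, Taylor flatness of $\psi$ at the origin is equivalent to Taylor flatness of $\sigma$ there, completing the proof.
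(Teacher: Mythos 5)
Your set-up and strategy are the same as the paper's: work in Eliasson coordinates with the trivialization of Lemma~\ref{Thetaform}, write the flatness condition as $Y(\psi)=-ixy\,\psi$, and compare Taylor coefficients at the origin. Your recursion $(j-k)\,c_{j,k}=-i\,jk\,c_{j-1,k-1}$ is correct and is exactly the complex form of the paper's system~\eqref{aijbij}, and your treatment of the off-diagonal and axis coefficients is sound (you descend one step at a time to the axes, where the factor $jk=0$ kills the right-hand side; the paper instead iterates in steps of two until an index goes negative — both work).

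The gap is precisely the point you flag as ``the main obstacle'': you claim that for $j=k$ the recursion ``gives no information'' and that a separate analytic argument is needed. That is a misreading of your own relation. For $j=k=m\ge 1$ the left-hand side is zero but the right-hand side is $-i\,m^{2}\,c_{m-1,m-1}$ with $m^{2}\neq 0$, so the identity \emph{forces} $c_{m-1,m-1}=0$ for every $m\ge 1$; in particular $c_{0,0}=\psi(0,0)=0$ and all diagonal derivatives vanish. The recursion does not determine $c_{j,j}$ from below, but it constrains it from above, via the instance at $(j+1,j+1)$. This is exactly how the paper disposes of the case $i=j$ in~\eqref{aijbij}, so no extra input is needed. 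By contrast, the dynamical argument you sketch instead (integrate along hyperbolas, use boundedness of $\psi$ across the singular leaf) is not carried out and would not be routine: boundedness near the axes does not by itself control Taylor coefficients, and making that idea precise essentially amounts to the explicit solution $a(h)e^{ih\beta}$ of Proposition~\ref{p:flatsecn} together with an analysis of smoothness across the axes — i.e.\ the content of Proposition~\ref{fourquad}, which in the paper is deduced \emph{from} the present proposition. Replace that paragraph with the one-line observation above and your proof is complete and essentially identical to the paper's.
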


\begin{proof}
According Theorem~\ref{eliasson-coords},
the foliation by level sets is generated by
\[ Y=x\frac{\partial}{\partial x}-y\frac{\partial}{\partial y}.\]

Take a trivializing section $s$ of $\LL$ as in Lemma~\ref{Thetaform},
so that the prequantum connection can be written
\[ \nabla_X(\sigma)=X(\sigma)-i \Theta_0(X) \sigma \]
(where here $\sigma$ represents a complex-valued function).
Thus any leafwise flat section $\sigma$ must satisfy the equation
\[Y(\sigma)= -ixy \sigma.\]
Writing $\sigma = \sigma_1 + i \sigma_2$ with $\sigma_1$ and $\sigma_2$
both real, we obtain the following two equations:
\begin{equation}\label{eqsig12}
\begin{split}
Y(\sigma_1)&=xy \sigma_2\\
Y(\sigma_2)&=-xy \sigma_1
\end{split}
\end{equation}

Let $\sum_{ij} a_{ij}x^i y^j$ be the Taylor expansion of $\sigma_1$
and $\sum_{ij} b_{ij}x^i y^j$ the Taylor expansion of $\sigma_2$.
We want to see that $a_{ij}=0$ and $b_{ij}=0$ $\forall i, j$.
In order to do that we plug the Taylor expansions into the
system~\eqref{eqsig12},
and obtain, for all $i,j$, the following system of equations for the
$a_{ij}$ and $b_{ij}$:
\begin{equation}\label{aijbij}
\begin{split}
(i-j) a_{ij} &=b_{i-1,j-1}\\
(i-j) b_{ij} &=a_{i-1,j-1}
\end{split}
\end{equation}

We distinguish two cases:
\begin{enumerate}
\item $i=j$.  In this case, we obtain immediately $b_{i-1,i-1}=0$
and $a_{i-1,i-1}=0$, for all $i$.

\item $i \neq j$. In this case, combining and solving
equations~\eqref{aijbij} yields:

\[b_{ij}=\frac{-b_{(i-2)(j-2)}}{(i-j)^2}.\]

Now iterating this process $k$ times we obtain:

\[ b_{ij}=\frac{(-1)^k b_{(i-2k)(j-2k)}}{(i-j)^{2k}}. \]

Now let $k$ be such that $i<2k$, then $b_{(i-2k)(j-2k)}=0$ since the
coefficients of the Taylor-Laurent expansion of a smooth function
vanish for negative subindexes.
\end{enumerate}

From this, we obtain $b_{ij}=0$ for all $i,j$ and therefore also
$a_{ij}=0$ for all $i,j$.
\end{proof}

\begin{prop}\label{p:flatsecn}
Let $U$ be an open set which does not intersect the singular leaf
and which is contained in the set $Z$ given in Theorem~\ref{eliasson-coords}.
Leafwise flat sections defined over $U$
(i.e.\ elements of $\J(U)$)
can be written in Eliasson coordinates as
\begin{equation}\label{flatsecn}
a(xy) e^{\frac{i}{2} xy \ln \abs{\frac{x}{y}}}
\end{equation}
where $a$ is a smooth complex function of one
variable.
\end{prop}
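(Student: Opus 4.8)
The plan is to solve the defining PDE for leafwise flat sections directly on the open set $U$, which misses the singular leaf $\{xy = 0\}$, and then argue that smoothness forces the solution into the stated form. First I would set up the equation exactly as in the proof of Proposition~\ref{flatatO}: using the trivialization from Lemma~\ref{Thetaform}, a section $\sigma$ (viewed as a complex-valued function) is leafwise flat iff $Y(\sigma) = -ixy\,\sigma$, where $Y = x\,\del/\del x - y\,\del/\del y$. The key observation is that on $U$, away from the axes, we may change to coordinates adapted to $Y$: take $u = xy$ (constant along the flow of $Y$, since $Y(xy) = xy - xy = 0$) and $v = \ln\abs{x/y}$ (which satisfies $Y(v) = 1 + 1 = 2$, so it is a ``time'' coordinate along $Y$, scaled by $2$). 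In these coordinates $Y = 2\,\del/\del v$, and the equation becomes $2\,\del\sigma/\del v = -iu\,\sigma$, an ODE in $v$ with $u$ as a parameter.

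Next I would integrate: the general solution of $2\,\del\sigma/\del v = -iu\sigma$ is $\sigma = a(u)\,e^{-\frac{i}{2}uv}$ for an arbitrary function $a$ of the transverse variable $u = xy$. Substituting back $u = xy$ and $v = \ln\abs{x/y}$ gives
\begin{equation*}
\sigma = a(xy)\, e^{-\frac{i}{2}\, xy\, \ln\abs{x/y}}.
\end{equation*}
There is a sign discrepancy with the stated formula~\eqref{flatsecn}, which I would reconcile by checking the orientation convention in $Y$ or in $\Theta_0$; most likely one absorbs a sign into the definition of $v$ or uses $\ln\abs{y/x}$, so I would simply carry through whichever sign makes the displayed formula~\eqref{flatsecn} come out, noting that it does not affect the structure of the argument. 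This shows every leafwise flat section on $U$ has the claimed shape with \emph{some} function $a$; the remaining point is regularity.

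The main obstacle is showing that the function $a$ must be \emph{smooth}, and conversely that smooth $a$ really does give a smooth section, since the factor $e^{\frac{i}{2}xy\ln\abs{x/y}}$ looks singular as one approaches the axes. For this I would argue on each connected component of $U$ separately (note $U$ lies in one of the four open quadrants of the hyperbolic cross, or at least in a region not meeting the axes, depending on how $U$ is chosen). On such a component the map $(x,y)\mapsto(xy, \ln\abs{x/y})$ is a diffeomorphism onto its image, so $a(xy) = \sigma \cdot e^{-\frac{i}{2}xy\ln\abs{x/y}}$ is a smooth function of $(x,y)$ that happens to depend only on the coordinate $u=xy$; hence $a$ is a smooth function of one variable on the relevant interval. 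Conversely, if $a$ is smooth then the product is smooth on $U$ because $U$ stays away from the axes where the exponential's phase blows up, so all factors are smooth there. If $U$ is allowed to straddle an axis (but still avoid the singular leaf — e.g.\ a neighbourhood of a point on one axis away from the origin), I would handle that by noting the exponent $xy\ln\abs{x/y}$ extends smoothly across that axis (since $xy\to 0$ there and $\ln\abs{x/y}$ grows only logarithmically, so the product and all its derivatives extend), and check the matching of $a$ on the two sides; but the cleanest exposition restricts to $U$ inside a single quadrant, which suffices for the Mayer--Vietoris cover used later.
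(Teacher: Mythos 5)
Your proof is correct and takes essentially the same route as the paper's: the paper also passes to coordinates adapted to the leaves, $h=xy$ and $\beta=\tfrac12\ln\abs{x/y}$ (your $u$ and $v/2$), in which the leaf direction becomes $\partial/\partial\beta$ and the connection potential is $-h\,d\beta$, solves the resulting ODE to obtain $\sigma=a(h)e^{\pm ih\beta}$, and then notes that the same argument runs quadrant by quadrant with adjusted sign choices. Two minor remarks: the sign discrepancy you flag is an internal inconsistency of the paper itself (later, in the parallel-transport discussion, the paper writes these sections as $a(h)e^{-ih\beta}$, matching your computation), and your contingency about $U$ straddling an axis is vacuous since the singular leaf meets $Z$ precisely in the axes, so the hypothesis already forces $U$ to avoid them---which is fortunate, because your parenthetical claim there is false: $xy\ln\abs{x/y}$ extends continuously but not smoothly across an axis (its first normal derivative diverges logarithmically), and smoothness of the glued section across the axes genuinely requires the Taylor-flatness of $a$, which is the content of Proposition~\ref{fourquad}, not of this one.
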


\begin{proof}
Define coordinates $(h,\beta)$ on the quadrant $\{ x>0, y>0 \}$ in $\R^2$
by
\begin{equation}\label{hbcoords}
\begin{split}
h &= xy \\
\beta &= \hf \ln \abs{\frac{x}{y}}
\end{split}
\end{equation}
so that
\begin{equation}\label{xycoords}
\begin{split}
x &= \sqrt{h}\, e^\beta\\
y &= \sqrt{h}\, e^{-\beta}
\end{split}
\end{equation}
This is valid provided neither $x$ nor $y$ is zero.
Also, $\omega = d\beta \wedge dh$
and $\Theta = -h \, d\beta$,  as can easily be checked.
Finally, in these coordinates, the vector field $Y$ is $-\dd{}{\beta}$.

Using the trivializing section from Lemma~\ref{Thetaform} we identify
a section $\sigma$ with a complex-valued function.
Then, using~\eqref{connection}, $\sigma$ will be flat if
\begin{equation}\label{flatcondn}
\nabla_{\dd{}{\beta}}\sigma = \dd{}{\beta}\sigma -
i \sigma h\, d\beta(\dd{}{\beta}) = 0
\end{equation}
which becomes
\[ \dd{\sigma}{\beta} = i \sigma h \]
which has solution
\[ \sigma = a(h) e^{ih\beta} \]
where $a$ is an arbitrary smooth (complex) function of one variable.
Changing back to $x$-$y$ coordinates gives the desired form for $\sigma$.

The above argument was valid for any open set in the first quadrant in $\R^2$.
A similar argument applies in the other quadrants,
with a slightly difference choice of signs in~\eqref{xycoords}.
(For example, in the second quadrant one should take $x=-\sqrt{-h}\,e^\beta$,
$y=\sqrt{-h}\, e^{-\beta}$.  Equations~\eqref{hbcoords} are unchanged.)
\end{proof}

\begin{prop}\label{fourquad}
Any leafwise flat section $\sigma$ defined over $Z$ can be written as
a collection
\begin{equation*}
\sigma_j = a_j(xy) e^{\frac{i}{2} xy \ln \abs{\frac{x}{y}}}\qquad j=1,2,3,4
\end{equation*}
where $a_j$ is a complex-valued smooth function of one variable, 
analytically flat at $0$, 
with domain such that $a_j(xy)$ is defined on the $j^\text{th}$
open quadrant of $\R^2$.
Conversely, given four such $a_j$, they fit together to define
a leafwise flat section $\sigma$ over $Z$ using the formula above.
\end{prop}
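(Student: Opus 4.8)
The plan is to combine the two local results already in hand: Proposition~\ref{p:flatsecn}, which describes leafwise flat sections on each quadrant as $a_j(xy)e^{\frac{i}{2}xy\ln\abs{x/y}}$, and Proposition~\ref{flatatO}, which says any flat section over $Z$ is Taylor flat at the origin. So the ``forward'' direction breaks into two observations. First, $Z$ minus the singular leaf $\{xy=0\}$ is exactly the disjoint union of the four open quadrants $Z\cap\{(-1)^?x>0,\ (-1)^?y>0\}$; restricting $\sigma$ to each one and applying Proposition~\ref{p:flatsecn} (with the sign conventions indicated there) yields four smooth one-variable functions $a_1,\dots,a_4$ with $\sigma_j=a_j(xy)e^{\frac{i}{2}xy\ln\abs{x/y}}$ on the $j$th quadrant. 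Second, I need to upgrade ``$\sigma$ smooth on $Z$'' to ``each $a_j$ analytically flat at $0$.'' Here is where Proposition~\ref{flatatO} enters: since $\sigma$ is flat on all of $Z$, it is Taylor flat at $(0,0)$, i.e.\ every partial derivative of $\sigma$ vanishes at the origin. I then want to transfer this vanishing to $a_j$. Along the positive $x$-axis approached from inside the first quadrant, say along the curve $x=t$, $y=t$ (so $h=xy=t^2\to0$, $\beta=0$), we have $\sigma(t,t)=a_1(t^2)$; more generally, restricting $\sigma$ to a suitable smooth arc into the $j$th quadrant on which $\ln\abs{x/y}$ stays bounded realizes $a_j(h)$ (up to a nonvanishing smooth factor $e^{\frac{i}{2}h\ln\abs{x/y}}$, which near $h=0$ is smooth and equals $1+O(h)$) as a smooth restriction of $\sigma$, and a chain-rule argument shows all derivatives of $a_j$ at $0$ vanish because all derivatives of $\sigma$ at $(0,0)$ do. This gives the stated form, with the flatness of $a_j$ at $0$.

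For the converse, I would run the argument in reverse. Given four one-variable smooth functions $a_j$, each analytically flat at $0$, define $\sigma_j:=a_j(xy)e^{\frac{i}{2}xy\ln\abs{x/y}}$ on the $j$th open quadrant. Each $\sigma_j$ is leafwise flat there by the computation in the proof of Proposition~\ref{p:flatsecn} (the exponential factor was precisely engineered so that $\nabla_Y\sigma_j=0$). The content of the converse is that these four functions, together with the value $0$ on the singular leaf $\{xy=0\}$, assemble into a single smooth section over $Z$. This is where the analytic-flatness hypothesis does its work: near a point of $\{xy=0\}$ away from the origin, say on the positive $x$-axis with $x_0>0$, as $y\to0^+$ we have $h=x_0y\to0$ and $\beta=\hf\ln(x_0/y)\to+\infty$, but the oscillatory/exponential factor $e^{ih\beta}=e^{\frac{i}{2}x_0 y\ln(x_0/y)}$ has modulus $1$ and, since $h\beta=\tfrac12 x_0 y\ln(x_0/y)\to0$, extends continuously by $1$ across $y=0$; because $a_1$ vanishes to infinite order at $0$, $a_1(h)$ and all its derivatives in $h$ are killed faster than any power of $1/\beta$ blows up, so $\sigma_1$ and all its partial $x$-$y$ derivatives extend smoothly by $0$ across that part of the axis. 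The same estimate applies along all four half-axes, and at the origin the four pieces match to infinite order because each separately is $o(\abs{(x,y)}^N)$ for every $N$. Hence $\sigma$ is $C^\infty$ on $Z$, and being flat on the dense open set $Z\setminus\{xy=0\}$ and smooth, it is flat on $Z$ (the flatness equation $Y(\sigma)=-ixy\sigma$ is a closed condition, and $Y$ vanishes on the singular leaf anyway).

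I expect the main obstacle to be the smoothness-across-the-singular-leaf estimate in the converse, i.e.\ making precise that ``$a_j$ flat at $0$'' plus ``the exponent $h\beta\to0$ with $\beta\to\infty$'' forces all mixed $x$-$y$ derivatives of $a_j(xy)e^{\frac{i}{2}xy\ln\abs{x/y}}$ to extend continuously by $0$ onto $\{xy=0\}$. The clean way to organize this: work in the $(h,\beta)$ coordinates of~\eqref{hbcoords}, where $\sigma=a(h)e^{ih\beta}$; differentiating in $h$ and $\beta$ produces terms of the form $a^{(m)}(h)\,(\text{polynomial in }h,\beta)\,e^{ih\beta}$, and since $a$ is flat at $0$, $\abs{a^{(m)}(h)}=O(h^N)$ for every $N$; meanwhile the change of coordinates $(x,y)\mapsto(h,\beta)$ has derivatives that blow up only polynomially in $1/y$ near the positive $x$-axis, hence only polynomially in $e^{\beta}$, and $h^N e^{k\beta}\to 0$ as $\beta\to\infty$ with $h=x_0e^{-\beta}\cdot(\text{bounded})$ for $N$ large. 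A Fa\`a di Bruno / multi-index bookkeeping argument makes this rigorous but is routine; I would state it as a lemma and keep the bookkeeping to a minimum. One should also double-check the sign/normalization conventions in the other three quadrants (as flagged in the proof of Proposition~\ref{p:flatsecn}), but that is purely a matter of symmetry.
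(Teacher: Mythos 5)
Your proposal is correct and follows essentially the same route as the paper: the forward direction restricts to the four quadrants via Proposition~\ref{p:flatsecn} and transfers the Taylor flatness of Proposition~\ref{flatatO} to the $a_j$ along the diagonal (where the logarithmic factor vanishes), and the converse shows all mixed partials of $a_j(xy)e^{\frac{i}{2}xy\ln\abs{x/y}}$ extend continuously by $0$ across the axes because a flat function dominates polynomial and logarithmic blow-up. The only difference is cosmetic bookkeeping: you organize the derivative estimates in the $(h,\beta)$ coordinates with a Fa\`a di Bruno lemma, while the paper writes the derivative terms directly in $x$-$y$ coordinates and disposes of them with two short calculus lemmas about Taylor-flat functions times rational functions of $t$ and $\ln t$.
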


\begin{proof}
Suppose we are given a leafwise flat section $\sigma$.
By Proposition~\ref{p:flatsecn}, $\sigma$ has the given form on any
open set $U$ which does not intersect the axis, and in particular on the
first quadrant part of $Z$ (in Eliasson coordinates).
By Proposition~\ref{flatatO}, $\sigma$ is analytically flat
at $(0,0)$.
This implies that the function of one variable $\sigma(x,x)$
is analytically flat at $x=0$.
But $\sigma(x,x) = a_1(x^2)$ (since the logarithm term is 0 if $y=x$),
and so all (one-sided) derivatives of $a_1$ vanish at 0.
A similar argument holds for the other quadrants.

Conversely, suppose we are given $a_j$ as in the proposition, and
note that by Proposition~\ref{p:flatsecn} they define a leafwise
flat section everywhere except on the axes. On the axes, since the
functions $a_k$ are Taylor flat, the jets of the functions agree as
we approach from either side (see below), and so the four components
piece together to make a smooth section over the entire ``hyperbolic
cross'' neighbourhood $Z$.

In more detail, note first the following facts, the proofs of which
are easy exercises in first-year calculus:
\begin{lemma}
Let $a(t)$ be Taylor flat at 0.  Then
\[
\lim_{t\to 0^+} a(t) (\ln t)^n = 0
\qquad \text{and }
\lim_{t\to 0^+} a(t)r(t) = 0 \]
for all $n \in \N$ and all rational functions $r$.
\end{lemma}
\begin{proof}
L'H\^opital, plus the observation that for $0<t<1$,
$\abs{\ln t} < \abs{\frac{1}{t}}$.
\end{proof}
\begin{lemma}
If $a(t)$ is Taylor flat, then $\lim_{t\to 0^+} a(t)\, r(t, \ln t) = 0$
for all rational functions $r$.
\end{lemma}
Finally (continuing the proof of Proposition~\ref{fourquad}), any term
\[\dd{^{j+k}}{x^j\,\del y^k} a_j(xy) e^{\frac{i}{2} xy \ln \abs{\frac{x}{y}}}\]
will be the sum of terms of the form
\[ a^{(n)}(xy)\, r(x,y)\, \bigl(\ln \bigl\lvert\tfrac{x}{y}\bigr\rvert \bigr)^m
e^{\frac{i}{2} xy \ln \abs{\frac{x}{y}}},\]
and thus by the Lemma will approach $0$ as $x,y \to 0$.
\end{proof}

From this proposition we see two important facts.

\begin{rmk}
As in the regular and elliptic cases (in~\cite{mhthesis}),
leafwise flat functions have the form of a smooth function
on a transversal to the leaf, times a fixed function of the leaf variable.
This is related to parallel transport, see \S\ref{ss:pt} below.
In a way, the coordinates $(h,\beta)$ used in the proof are somewhat like
action-angle coordinates, except that they are not defined on the
singular leaf,
and $\beta$ is not an ``angle,'' but runs from $-\infty$ to $\infty$.
\end{rmk}

\begin{rmk}\label{rmk:sections}
A smooth, leafwise flat section over
a neighbourhood of the singularity has four
essentially independent components, each defined on one quadrant.
The only requirement is that each of the functions $a_k$ on the
transversals has to vanish to infinite order at the singular leaf.
To give such a section, it suffices to give four such functions $a_k$.
This will play a key role in the cohomology computation.
\end{rmk}

\begin{defn}
Henceforth, when we say a section is ``Taylor flat at the singular leaf,''
we mean that the function on the transversal defining the section
(the function $a$ above)
is Taylor flat at the singular leaf.
\end{defn}

\section{Cohomology calculation, part I: the set-up}\label{cohom-setup}

Unlike in the elliptic case, there are different possibilities for the
topology of a leaf containing a hyperbolic singular point.
We start with the simplest possibility, where there is one
singular point and the singular leaf $\lf$ has the shape of a figure-eight.
Consider a neighbourhood $U(\lf)$ of this leaf formed by unions of regular
fibres of $F$ ``on either side'' of $\lf$, shown in
Figure~\ref{model}.
(For a concrete realization of this system, imagine a torus standing
``on its end,'' like a bicycle wheel, and take $F$ to be the height
function, normalized so that the bottom of the inside hole is at height 0.
Then $F\inv\bigl((-c,c)\bigr)$ looks like Figure~\ref{model}.)
We will carry out the computations for this example
in some detail, as it exhibits the main features we find in general.
In~\S\ref{s:manysing}
we show how these results extend to the case of more complicated
leaves, with more singularities.

\begin{figure}[h]
\centerline{\includegraphics[height=2in]{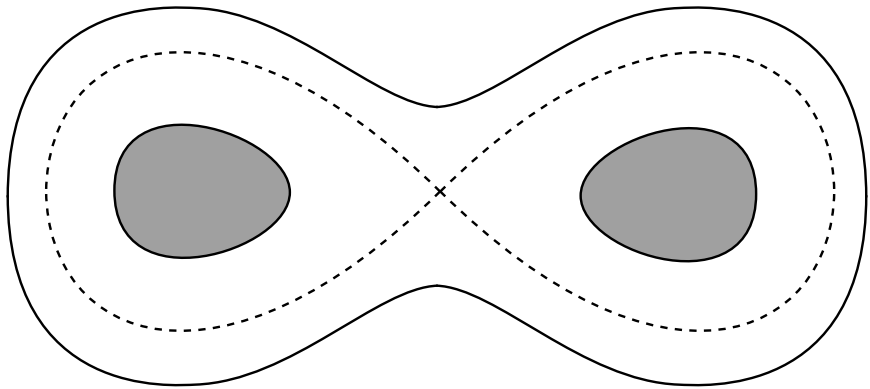}}
\caption{The model system}\label{model}
\end{figure}

We compute cohomology using a \v Cech approach,
by choosing an open covering, functions on the sets in the covering,
and so on.
Although \v Cech cohomology is defined as the direct limit over
the set of all coverings, in~(\cite{mhthesis}, \S3) we saw that the interesting
features of the cohomology appeared already in the computation using the
simplest covering, and so this is what we use here.
In \S\ref{refcov} we will
show that we have computed the actual sheaf cohomology.

Also, by comparison to~\cite{sniatpaper} and~\cite{mhthesis},
we expect the cohomology to only be non-trivial in degree 1, and
so from now on when we say ``cohomology'' without other specification,
we mean first cohomology.
In section~\ref{s:higher-cohom} we show that other cohomology is trivial.

The simple covering consists of three sets, the ``hyperbolic cross'' $Z$
together with two other sets covering the rest of $U(\lf)$, as shown in
Figure~\ref{ubiq}:

\begin{figure}[h]
\centerline{\includegraphics[height=2in]{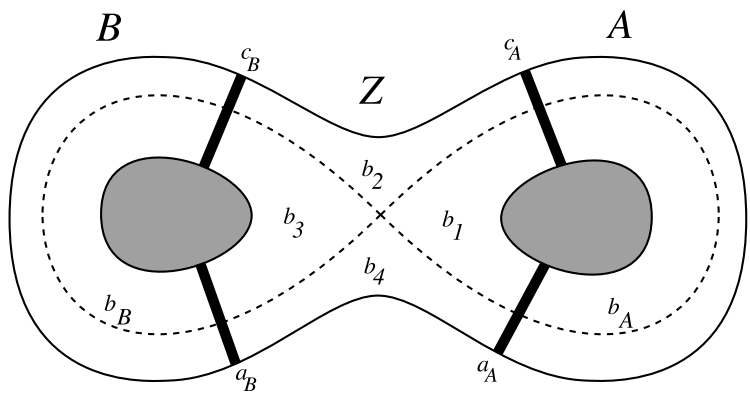}}
\caption{A simple cover of the figure-eight}\label{ubiq}
\end{figure}

The thick lines indicate an overlap of open sets,
denoted by $AZ$ for $A\cap Z$, etc, and
the dotted line indicates the singular leaf.
The letters $a$, $b$, and $c$
indicate leafwise flat sections defined on particular sets,
the collection of which determines an element of \v Cech cohomology.
We use the convention that $a$ and $c$ are functions on
the intersections of two sets, and $b$ are functions on one single set.
So, for example, $a_A$ is a function defined on $AZ = A\cap Z$, and
$b_A$ is a function defined on $A$.
(Since all overlaps include $Z$, we use $a_A$ rather than $a_{AZ}$, for
simplicity.)
Also, $b^1_Z$ through $b^4_Z$ are the sections defined on the quadrants of
the hyperbolic cross, making up a leafwise flat section over the cross
as in Proposition~\ref{fourquad}.
Thus, the $a$'s and $c$'s make up a \v Cech 1-cochain,
and the $b$'s make up a 0-cochain.
Use the ordering convention on the coboundary operator
that $(\delta b)$ on $AZ$ is $b_Z - b_A$, and on $BZ$ is $b_Z - b_B$.

We are interested in $H^1$, and so we are asking:
Given $a$'s and $c$'s as in the diagram, which define a 1-cocycle,
when do there exist $b$'s so that the coboundary of the $b$ cochain
equals the cocycle defined by the $a$'s and $c$'s?

\subsection{Parallel transport}\label{ss:pt}
In order to compare the values of sections at different points,
we use parallel transport.

Given the value of a section at one point $x_0$ on a leaf, the value
on the rest of the leaf is determined by the condition that the section
be leafwise flat.
Given a value for $\sigma(x_0)$, we can construct
a leafwise flat $\sigma$ over
the entire leaf through $x_0$ by parallel transporting $\sigma(x_0)$
along the leaf.

Given two points $P$ and $Q$ in the same leaf, we will denote
parallel transport from $P$ to $Q$ by $\tau_{PQ}$.
Thus, if $a$ is a flat section, $a(Q) = a(P) \tau_{PQ}$.
Formally, $\tau_{PQ}$ is an automorphism of $\LL_P$ to $\LL_Q$;
if $\LL$ can be trivialized over a set containing both $P$ and $Q$,
then we can just think of $\tau_{PQ}$ as a nonzero complex number.
Note that $\tau_{PQ} = \tau_{QP}\inv$, whether as automorphisms
or as complex numbers.

This is related to the description of the sections in
Proposition~\ref{p:flatsecn} as $a(h) e^{-ih\beta}$.
For a fixed value of $h$, say $h_0$, once we know $a(h_0)$,
then the value of the section is fixed everywhere on the leaf.
The term $e^{-ih\beta}$ represents the change due to parallel transport.

\section{Cohomology calculation, part II: explicit calculation}\label{ugly}

To carry out the computations, we refer to Figure~\ref{ubiq2}.

\begin{figure}[h]
\centerline{\includegraphics[height=2in]{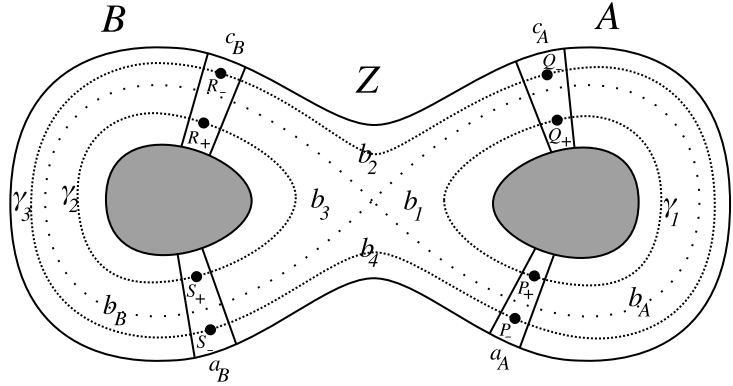}}
\caption{Diagram for the cohomology calculation}\label{ubiq2}
\end{figure}

This is the same as Figure \ref{ubiq} with more information: we have
added three leaves we will be considering, labelled $\gamma_1$,
$\gamma_2$, and $\gamma_3$, and marked points on these leaves as
shown. We have also shown the overlaps between sets, so for example
$P_+$ and $P_-$ are in $A\cap Z$.

First, we fix some notation:
$\tau^Z_{PQ}$ will denote parallel transport from $P$ to $Q$
\emph{through the set $Z$.}
When we are looking at, for example, transport from $P_+$ to $Q_+$
we will write $\tau^Z_{PQ_+}$ (rather than $\tau^Z_{P_+Q_+}$).
There will never be parallel transport between a ``$-$'' point 
and a ``$+$'' point, because they are on different leaves.

Note that an appropriate combination of parallel transport gives us
holonomy: for example,
\[ \tau_{PQ_+}^A \tau_{QP_+}^Z = \hol_{\gamma_1}. \]

\gap

Remember that what we are trying to do, exactly, is to
answer the following question:
Given a collection $\{ a_A, c_A, a_B, c_B\}$ of sections defining
a 1-cochain, when can we find sections
$\{ b_A, b_B, b_1, b_2, b_3, b_4 \}$
making up  a 0-cochain whose coboundary is the given 1-cochain?
The set of $a$'s and $c$'s where this is possible gives us $B^1$, 
the set of 1-coboundaries.
As it turns out, the three loops $\gamma_1$, $\gamma_2$, and $\gamma_3$ 
give independent contributions to the cohomology, and 
$H^1(U(\lf);\J)$ will be the direct sum of the contributions from each loop.
We look at each one in turn and collect the results together 
in Theorem~\ref{thm:summary}.


\subsection{Gamma 1}\label{ss:gamma1}

First look at $\gamma_1$.  We have the following relations
coming from the coboundary conditions:
\begin{subequations}\label{A}
\begin{align}
a_A(P_+) &= b_1(P_+) - b_A(P_+)\label{A1}\\
c_A(Q_+) &= b_1(Q_+) - b_A(Q_+)\label{A2}
\end{align}
\end{subequations}
We also have the following relations between the values of the
sections at different points:
\begin{subequations}\label{Atransport}
\begin{align}
b_A(Q_+) &= b_A(P_+)\tau^A_{PQ_+}\\
b_1(P_+) &= b_1(Q_+)\tau^Z_{QP_+}
\end{align}
\end{subequations}
From \eqref{Atransport}, \eqref{A2} becomes
\begin{equation*}
c_A(Q_+) = b_1(P_+)(\tau^Z_{QP_+})\inv -
    b_A(P_+)\tau^A_{PQ_+}
\end{equation*}
so that the system \eqref{A} becomes
\begin{subequations}\label{Arev}
\begin{align}
a_A(P_+) &= b_1(P_+) - b_A(P_+)\\
c_A(Q_+)\tau^Z_{QP_+} &= b_1(P_+) -
    b_A(P_+)\tau^A_{PQ_+}\tau^Z_{QP_+}
\end{align}
\end{subequations}
This can be viewed as a system of two equations for the two
unknowns $b_1(P_+)$, $b_A(P_+)$.  The coefficient matrix of this system
is
\begin{equation*}
\begin{bmatrix}
1&  -1\\
1&  -\tau^A_{PQ_+}\tau^Z_{QP_+}
\end{bmatrix}
\end{equation*}
which has determinant
\begin{equation*}
1- \tau^A_{PQ_+}\tau^Z_{QP_+}  = 1- \hol_{\gamma_1}.
\end{equation*}
Thus this matrix is nonsingular, and so~\eqref{A} has a unique solution,
precisely when $\hol_{\gamma_1} \neq 1$.
This solution is:
\begin{subequations}\label{hol1}
\begin{align}
a_A(P_+)  - c_A(Q_+)\tau^Z_{QP_+}   =
b_A(P_+)\bigl( \hol_{\gamma_1} - 1 \bigr)\label{hol1-1}\\
a_A(P_+) - c_A(Q_+)(\tau^A_{PQ_+})\inv
= b_1(P_+)\bigl( 1-\hol_{\gamma_1}\bigr)\label{hol1-2}
\end{align}
\end{subequations}
This gives $b_1$ and $b_A$ at the single point $P_+$; however,
as noted previously, the value of a flat section
at one point determines the value everywhere else along the leaf,
and so this gives a solution for $b_A$ and $b_1$ on the entire leaf.
Finally, by letting $P_+$ vary along a transversal to the leaves,
we get $b_A$ and $b_1$ on the entire neighbourhood inside the singular leaf.

If $\hol_{\gamma_1} = 1$, then a linear algebra argument shows
that~\eqref{A} has a solution (and thus the cocycle is a coboundary)
if{f} $a_A(P_+) - c_A(Q_+)\tau^Z_{QP_+} =0$.
Since a cocycle is defined by two smooth functions on the transversal
(determining the sections $a_A$ and $c_A$), the Bohr-Sommerfeld 
contribution to the cohomology from $\gamma_1$ is
\begin{equation*}
\frac{\{\text{cocycles}\}}{\{\text{coboundaries}\}}
\cong \frac{\{(a,c) \text{ smooth functions on an interval} \}}
{ \{ a = c \text{ at one point} \}}
\end{equation*}
This is exactly what we saw appearing in~\cite{mhthesis} (\S3.2.2).
As we saw there (Lemma 3.3), the above quotient is isomorphic to $\C$, and so
if $\gamma_1$ is Bohr-Sommerfeld,
it gives a one-dimensional contribution to cohomology.
(See also Theorem~\ref{thm:summary}.)

However, this is not the only contribution from $\gamma_1$.
The flatness
properties discussed in~\S\ref{s:flatsec} affect the calculation as well:
in searching for solutions to~\eqref{A}, we do not have complete freedom
in choosing $b_A$ and $b_1$, because of the condition that $b_1$ has to
be Taylor flat at the singular leaf.

Consider again the system~\eqref{hol1}.
It is valid for all $P_+$ inside the singular leaf.
If we think of the sections as functions of one variable as $P_+$ 
varies along a transversal to the leaf, then the properties 
discussed in~\S\ref{s:flatsec} imply that 
$b_1(P_+)$, and thus
the right-hand side of~\eqref{hol1-2}, is
Taylor flat as $P_+$ approaches the singular leaf.
Therefore, in order for the system~\eqref{hol1} to have a solution, 
it is necessary that 
\begin{equation}\label{holcon1}
a_A(P_+) - c_A(Q_+)(\tau^A_{PQ_+})\inv
\end{equation}
be Taylor flat at the singular leaf 
(viewing $P_+$ as a variable, which determines $Q_+$), 
which is to say that $a_A$ and $c_A(Q_+)(\tau^A_{PQ_+})\inv$
agree to infinite order at the singular leaf.  This will give another
contribution in cohomology, which we will clarify in~\S\ref{ss:flatfn} 
and~\ref{ss:main}.

\subsection{Gamma 2}\label{ss:gamma2}

The picture is similar for $\gamma_2$ as for $\gamma_1$.
The coboundary equations
\begin{align*}
a_B(S_+) &= b_3(S_+) - b_B(S_+)\\
c_B(R_+) &= b_3(R_+) - b_B(R_+)
\end{align*}
are exactly the same as system~\eqref{A}, with $A$ replaced by $B$,
$P$ replaced by $S$, $Q$ replaced by $R$, and $b_1$ replaced by $b_3$.
Thus, they have solutions identical to~\eqref{hol1}
with these same replacements, namely:
\begin{align}\label{hol2}
a_B(S_+)  - c_B(R_+)\tau^Z_{RS_+}
&=b_B(S_+)\bigl( \hol_{\gamma_1} - 1 \bigr)\\
a_B(S_+) - c_B(R_+)(\tau^B_{SR_+})\inv
&= b_3(S_+)\bigl( 1-\hol_{\gamma_1}\bigr)
\end{align}
The second equation gives us, by the same argument, the condition that
\begin{equation}\label{holcon2}
a_B(S_+) - c_B(R_+)(\tau^B_{SR_+})\inv
\end{equation}
has to be Taylor flat at the singular leaf. 
This gives another ``flat functions'' 
contribution to the cohomology, which we will 
discuss in~\S\ref{ss:main}.

\subsection{Gamma 3}
The computation for $\gamma_3$ is similar, except that since
$\gamma_3$ passes through all four components of $AZ$ and $BZ$, we
have four equations instead of two. We get a similar phenomenon
involving the holonomy, giving us a contribution of $\C$ for each
Bohr-Sommerfeld leaf. Since the calculation is similar to (although
longer than) the previous two, and since our main interest at the
moment is in the flat functions and the infinite contributions to
cohomology, we will leave out the \BS\ calculation, except to note
in passing that the \BS\ contribution will come from the holonomy
all around $\gamma_3$, but will still give one factor of $\C$ in
cohomology. Thus, the \BS\ leaves inside and outside the singular
leaf make equal contributions to the cohomology. For example, in the
torus realization mentioned in section~\ref{cohom-setup}, even if
the level set has two connected components (represented in
Figure~\ref{model} by the inner circles), each component is
independent in terms of its cohomology. (In fact, since these leaves
are regular, \'Sniatycki's results apply to give us their
contribution to cohomology directly.)

We focus now on the question of role of the flat functions for $\gamma_3$.
As there are two Taylor flat functions in this computation, $b_4$ and $b_2$,
we wish to find the solutions to the coboundary equations for $b_4$ and $b_2$,
which will give us two conditions that certain
combinations of the $a$'s and $c$'s have to be Taylor flat.
The calculations are similar in form, though more complicated, to those
given in~\ref{ss:gamma1}.
Out of compassion for the reader, we omit the details,
and merely give the results.

We start with
four equations coming from the coboundary conditions,
starting at $P_-$:
\begin{subequations}\label{outer-cob}
\begin{align}
a_A(P_-) &= b_4(P_-) - b_A(P_-)\label{oc1}\\
c_A(Q_-) &= b_2(Q_-) - b_A(Q_-)\label{oc2}\\
c_B(R_-) &= b_2(R_-) - b_B(R_-)\label{oc3}\\
a_B(S_-) &= b_4(S_-) - b_B(S_-)\label{oc4}
\end{align}
\end{subequations}
We also have relationships between the values of each function
at different points, coming from parallel transport:
\begin{subequations}\label{outer-transport}
\begin{align}
b_A(Q_-) &= b_A(P_-)\tau^A_{PQ_-}\label{ot1}\\
b_2(R_-) &= b_2(Q_-)\tau^Z_{QR_-}\label{ot2}\\
b_B(S_-) &= b_B(R_-)\tau^B_{RS_-}\label{ot3}\\
b_4(P_-) &= b_4(S_-)\tau^Z_{SP_-}\label{ot4}
\end{align}
\end{subequations}
Starting at $P_-$, we can use these formulae to ``push along'' the leaf
until we come back around to $P_-$.
The calculation involving $b_4$ (details omitted) yields
\begin{multline}\label{preb4}
b_4(P_-) - b_4(P_-)\tau^A_{PQ_-}\tau^Z_{QR_-}\tau^B_{RS_-}\tau^Z_{SP_-}
= a_B(S_-)\tau^Z_{SP_-}\\
+ c_A(Q_-)\tau^Z_{QR_-} \tau^B_{RS_-}\tau^Z_{SP_-}
 - a_A(P_-)\tau^A_{PQ_-}\tau^Z_{QR_-} \tau^B_{RS_-}\tau^Z_{SP_-}
 - c_B(R_-)\tau^B_{RS_-}\tau^Z_{SP_-}
\end{multline}
We recognize the coefficient of the second $b_4$ as the holonomy
(it also appears with $a_A$),
and so
this simplifies to
\begin{equation}\label{hol3b4}
\begin{split}
b_4(P_-)\bigl( 1 - \hol_{\gamma_3} \bigr)
= a_B(S_-)\tau^Z_{SP_-}
+ c_A(Q_-)\tau^Z_{QR_-} \tau^B_{RS_-}\tau^Z_{SP_-} \\
 - a_A(P_-)\hol_{\gamma_3} 
 - c_B(R_-)\tau^B_{RS_-}\tau^Z_{SP_-}.
\end{split}
\end{equation}
As in the previous sections, this tells us that this
particular combination of $a$'s and $c$'s has to
be Taylor flat at the singular leaf in order for the cohomology
equations to have a solution.

At first, this seems like another condition, which will give another
contribution to the cohomology.
However, if we look more closely, we see that it is not independent of
our earlier conditions.
Explicitly, if we take~\eqref{holcon1} times $\hol_{\gamma_3}$
plus~\eqref{holcon2} times $-\tau^Z_{SP_+}$, we obtain exactly
the right-hand side of~\eqref{hol3b4}, except
the points have $+$'s instead of $-$'s.
However, the condition applies \emph{at the singular leaf.} 
Since $P_+$ and $P_-$ approach the same point on the singular leaf, 
and since the Taylor series of a function is the same ``from either side,''
the condition in~\eqref{hol3b4} is already implied by conditions coming 
from~\eqref{hol1} and~\eqref{hol2}, and so does not give any new contribution
to the cohomology.

Similarly, we can go through the same process to solve~\eqref{outer-cob}
for $b_2$, which gives us another combination of $a$s and $c$s
that has to be Taylor flat at the singular leaf,
but which also turns out to be
already implied by~\eqref{hol1} and~\eqref{hol2}.

\subsection{The ``flat functions'' contribution to cohomology}
\label{ss:flatfn}

So far we have found two independent conditions 
\eqref{holcon1} and \eqref{holcon2} 
that certain combinations of sections must be analytically flat 
(as well as two similar conditions that turn out not to be independent).  
In this section we explore  what contributions these conditions 
make to the cohomology.
In both cases, the condition requires that two sections agree to 
infinite order at the singular leaf, which is equivalent to the condition 
that two functions of one variable (on a transversal to the leaf, 
defining the section) agree to infinite order at one point.
Let $I$ be an open interval, and fix a reference point $x_0 \in I$.
For two functions $a, c \in C^\infty(I)$,
let $a \approx c$ mean that $a$ and $c$ agree to infinite order at $x_0$.
Since each section is defined by a function on a transversal to the leaves, 
and the coboundaries are those where the two functions
agree to infinite order at the singular leaf, 
we will be looking at quotients of the form 
$C^\infty(I)^2 / \{ a \approx c \}$.

\begin{lemma}\label{CN}
The quotient $C^\infty(I)^2 / \{ a \approx c \}$ is isomorphic to 
the space of complex-valued sequences, which we denote by $\C^\N$.
\end{lemma}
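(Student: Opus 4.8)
The plan is to exhibit an explicit isomorphism $C^\infty(I)^2 / \{a \approx c\} \xrightarrow{\sim} \C^\N$ by sending a pair $(a,c)$ to the sequence of differences of Taylor coefficients at $x_0$. Concretely, if $a(x) \sim \sum_{n\ge 0} \alpha_n (x-x_0)^n$ and $c(x) \sim \sum_{n\ge 0} \gamma_n (x-x_0)^n$ are the Taylor expansions at $x_0$, I would define the map by $(a,c) \mapsto \bigl( \tfrac{\alpha_n - \gamma_n}{n!}\bigr)_{n\in\N}$ (the factorials are cosmetic; one may equally use $\alpha_n - \gamma_n$ or the raw derivatives $a^{(n)}(x_0) - c^{(n)}(x_0)$). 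First I would check this is well-defined on the quotient: $a \approx c$ means precisely that all Taylor coefficients of $a$ and $c$ agree at $x_0$, i.e.\ $\alpha_n = \gamma_n$ for all $n$, which is exactly the kernel of the map. It is visibly linear.

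Next I would check surjectivity, which is the only step requiring a genuine input: every complex sequence arises as the sequence of Taylor coefficients of some smooth function. This is the classical theorem of \'E.~Borel, which guarantees that for any sequence $(d_n)_{n\in\N}$ there exists $f \in C^\infty(I)$ with $f^{(n)}(x_0) = d_n$ for all $n$ (obtained by summing $\sum d_n \chi(x/\epsilon_n) (x-x_0)^n/n!$ for a suitable cutoff $\chi$ and rapidly shrinking $\epsilon_n$). Given a target sequence, apply Borel's theorem to produce such an $f$, then take $(a,c) = (f, 0)$; its image is the prescribed sequence (up to the fixed normalization), so the map is onto. Injectivity on the quotient is immediate from the kernel computation above.

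The main (and essentially only) obstacle is invoking Borel's theorem correctly — ensuring the constructed function is genuinely $C^\infty$ on the interval $I$ and has the prescribed jet at $x_0$; everything else is linear algebra. Since this is a standard result I would simply cite it rather than reprove it. One small bookkeeping point worth a sentence: the quotient should be interpreted as identifying $(a_1,c_1) \sim (a_2, c_2)$ when $(a_1 - a_2) \approx (c_1 - c_2)$, equivalently when $a_1 - c_1 \approx a_2 - c_2$, so the whole quotient depends on $(a,c)$ only through the single function $a - c \in C^\infty(I)$, and $C^\infty(I)/\{f \approx 0\} \cong \C^\N$ via $f \mapsto (f^{(n)}(x_0))_n$; this reformulation makes the argument a one-liner once Borel is in hand, and I would present it this way.
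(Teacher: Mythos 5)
Your proposal is correct and follows essentially the same route as the paper: map a pair to the sequence of differences of Taylor coefficients at $x_0$, observe the kernel is exactly $\{a \approx c\}$, and get surjectivity from Borel's theorem on prescribing jets. The reduction to the single function $a-c$ is a harmless (and tidy) repackaging of the same argument.
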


\begin{proof}
Let $a^{(n)}(x_0)$ denote the $n^\text{th}$ Taylor coefficient of $a$ at $x_0$.
Then map $C^\infty(I)^2$ to $\C^\N$ by the map that puts
$\bigl(a^{(n)}(x_0)-c^{(n)}(x_0)\bigr)$ in the $n^\text{th}$ place.
This map has kernel exactly $\{ a\approx c\}$. 
To see it is surjective we 
apply  Borel's theorem which says that given a sequence $z_n$ of
complex numbers there exists a complex smooth function
$f$ such that $f^{(p)}(x_0)=z_p$. (See for example~\cite{borel2} or
\cite{borel}.) 
\end{proof}

\subsection{If the singular leaf is \BS}\label{ss:sing-BS}

So far, we have only considered the possibility of non-singular \BS\
leaves. What happens if the singular leaf is \BS? 
(Note that each of
the two loops in the singular leaf can be \BS, and that these
conditions are independent.)

Look at the system~\eqref{hol1},
which we reproduce here, and consider what happens as $\gamma_1$
approaches the singular leaf.
\begin{align*}
a_A(P_+)  - c_A(Q_+)\tau^Z_{QP_+}   =
b_A(P_+)\bigl( \hol_{\gamma_1} - 1 \bigr)\\
a_A(P_+) - c_A(Q_+)(\tau^A_{PQ_+})\inv
= b_1(P_+)\bigl( 1-\hol_{\gamma_1}\bigr)
\end{align*}
The holonomy $\hol_{\gamma_1}$ will be a smooth function of the
``leaf variable,''
and so we can look at each side of, say, the first equation above
as a function of the ``leaf variable.''
Even if the holonomy at the singular leaf is 1, so that the right side
vanishes at the singular leaf, the right side as a function
already vanishes \emph{to infinite order} at the singular leaf.
Thus the left side still has to be Taylor flat,
and so we still get the infinite-dimensional contribution to cohomology,
regardless of whether the singular leaf is \BS\ or not.

On the other hand, the contribution of one factor of $\C$ for
a regular \BS\ leaf does \emph{not} occur for the singular leaf.
This factor comes out of the cohomology calculation because of a condition
that the \emph{values} of $a$ and $c$ at the \BS\ leaf have to agree,
but this is already required by the condition that they have to agree
to infinite order.
Thus there is no additional \BS\ contribution.

\subsection{Summary of the calculations}\label{ss:main}

Here we collect the results from the preceding calculations into 
one place.

\begin{thm}\label{thm:summary}
The first cohomology of the neighbourhood $U(\lf)$ of the
figure-eight hyperbolic system given in Figure~\ref{model} has
two contributions of the form $\C^\N$, each one corresponding
to a space of Taylor series in a complex variable. It also has one
$\C$ term for each non-singular Bohr-Sommerfeld leaf. That is,
\begin{equation}\label{cohom}
H^1\bigl( U(\lf),\J\bigr) \cong \C^\N \oplus \C^\N 
\oplus \bigoplus_{b\in BS} \C_b 
\end{equation}
where the sum is over the non-singular \BS\ leaves.
\end{thm}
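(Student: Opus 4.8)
The plan is to assemble the theorem from the per-loop analyses already carried out in \S\ref{ss:gamma1}--\ref{ss:flatfn}, together with the observation in \S\ref{ss:sing-BS}, by verifying that the three loops $\gamma_1$, $\gamma_2$, $\gamma_3$ contribute \emph{independently} to $H^1$ and that no further contributions arise. First I would fix the simple cover $\{Z, A, B\}$ of Figure~\ref{ubiq} and recall that a \v Cech $1$-cochain is a tuple $(a_A, c_A, a_B, c_B)$ of leafwise flat sections on the overlaps, while a $0$-cochain is $(b_A, b_B, b^1_Z,\dots,b^4_Z)$; since all pairwise overlaps are with $Z$ and triple overlaps are empty, the cocycle condition is automatic and $H^1 = \{\text{1-cochains}\}/\delta\{\text{0-cochains}\}$. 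The task is then to describe precisely, for which $(a_A,c_A,a_B,c_B)$ the coboundary equations can be solved for the $b$'s.

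The key steps, in order: (1) For each loop $\gamma_i$, solve the coboundary equations as in \S\ref{ss:gamma1}. When $\hol_{\gamma_i}\neq 1$ the linear system~\eqref{hol1} has a unique solution for $b_A, b^1_Z$ (resp.\ the analogues for $\gamma_2,\gamma_3$) at each point of a transversal, hence on the whole neighbourhood, \emph{provided} the Taylor-flatness obstruction is met; when $\hol_{\gamma_i}=1$ one picks up in addition the one-dimensional \BS\ quotient of~\cite{mhthesis} (Lemma~3.3), giving the summand $\C_b$ for each non-singular \BS\ leaf on that loop. (2) Record the two independent flatness conditions~\eqref{holcon1} and~\eqref{holcon2} arising from $b^1_Z$ and $b^3_Z$ having to be Taylor flat at the singular leaf, and invoke Lemma~\ref{CN}: each condition contributes a factor $C^\infty(I)^2/\{a\approx c\}\cong\C^\N$. (3) Check that the two analogous conditions from $\gamma_3$ (for $b^4_Z$ and $b^2_Z$) are \emph{not} new: as shown after~\eqref{hol3b4}, the right-hand side there equals $\hol_{\gamma_3}\times\eqref{holcon1} - \tau^Z_{SP_+}\times\eqref{holcon2}$ up to replacing ``$-$'' points by ``$+$'' points, and since $P_\pm$ limit to the same point of the singular leaf and a Taylor series is one-sided-independent, this condition is already implied. (4) Invoke \S\ref{ss:sing-BS}: if a loop of the singular leaf is \BS, the $\C^\N$ contribution persists (the right-hand sides of~\eqref{hol1} vanish to infinite order there regardless), but no extra $\C$ appears, since agreement of \emph{values} is subsumed by agreement to infinite order. (5) Conclude that $H^1$ splits as the direct sum: two copies of $\C^\N$ from~\eqref{holcon1},~\eqref{holcon2}, plus one $\C_b$ for each non-singular \BS\ leaf, and that $H^0$ and $H^{\geq 2}$ vanish (the latter deferred to \S\ref{s:higher-cohom}, the former because a global flat section over $U(\lf)$ would have to be flat over the singular leaf and Taylor flat at the singular point, forcing it to vanish).

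The main obstacle is step~(3): making rigorous the claim that the $\gamma_3$ flatness conditions are dependent on those from $\gamma_1$ and $\gamma_2$. This requires carefully tracking the parallel-transport factors around the outer loop in~\eqref{preb4}--\eqref{hol3b4}, matching them termwise against the products appearing in~\eqref{holcon1} and~\eqref{holcon2} after multiplication by the appropriate transport factors, and then arguing that the discrepancy between the ``$+$'' and ``$-$'' base points is invisible at the level of Taylor series at the singular leaf because $\tau^Z_{PQ_\pm}$ and $\hol_{\gamma_i}$ extend smoothly across (indeed, in the Eliasson coordinates of Proposition~\ref{p:flatsecn}, the relevant transports are governed by the factor $e^{\frac{i}{2}xy\ln|x/y|}$, whose Taylor behaviour at the axes was handled in the proof of Proposition~\ref{fourquad}). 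Once that dependence is established, the direct-sum decomposition and the identification of each summand via Lemma~\ref{CN} and~\cite{mhthesis} are essentially bookkeeping. A secondary point needing care is the claim that the contributions of the three loops are genuinely \emph{independent} — i.e.\ that the combined obstruction map $(a_A,c_A,a_B,c_B)\mapsto$ (two Taylor-coefficient sequences, plus the \BS\ value-differences) is surjective onto $\C^\N\oplus\C^\N\oplus\bigoplus_b\C_b$; this follows from Borel's theorem applied independently on disjoint transversal arcs for $\gamma_1$ and $\gamma_2$, but the disjointness of the relevant pieces of the transversals should be stated explicitly.
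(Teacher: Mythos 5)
Your proposal is correct and follows essentially the same route as the paper: per-loop solution of the coboundary equations, the two flatness conditions \eqref{holcon1} and \eqref{holcon2} identified via Lemma~\ref{CN} as $\C^\N$ summands, dependence of the $\gamma_3$ conditions, exclusion of a singular \BS\ contribution, and surjectivity of the combined obstruction map via Borel's theorem with kernel the coboundaries. The paper's proof is just a slightly more explicit write-up of the same map from cocycles $(a_A,c_A,a_B,c_B)$ to Taylor-coefficient sequences plus \BS\ value-differences.
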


\begin{proof}
Let the points $P$, $Q$,
$R$ and $S$ be the points on the singular leaf that are the
limits of $P_+$, $P_{-}$, $Q_{+}$,$Q_{-}$, etc.\ 
when $\gamma_1$, $\gamma_2$ and $\gamma_3$ approach
the singular leaf. As we pointed out in the computations involving
$\gamma_1$ and $\gamma_2$, the expressions

\begin{equation*}
a_A(P_+) - c_A(Q_+)(\tau^A_{PQ_+})\inv
\end{equation*}
(equation \eqref{holcon1}) and

\begin{equation*}
a_B(S_+) - c_B(R_+)(\tau^B_{SR_+})\inv
\end{equation*}
(equation \eqref{holcon2}) can be seen as functions in the variables
$P_+$ and $S_+$ respectively (since the variables $Q_+$ and $R_+$
can be determined from these). These functions can be seen as
functions on the two transversals at $P$ and $S$ to the singular leaf. 
Thus we can think of these functions as functions of 
one variable (on an open interval $I$ centered at zero), 
which we denote by $a_A - c_A \tau^A$ and $a_B - c_B \tau^B$,
respectively. 
As in Lemma~\ref{CN}, let $f^{(n)}(0)$ denote the $n^\text{th}$
Taylor coefficient of the function $f$ at $0$.

As noted at the beginning of~\S\ref{ugly}, 
the space $Z^1$ of 1-cocycles is the collection 
$Z^1 =\{ (a_A, c_A, a_B, c_B) \}$.  
Map $Z^1$ into the right-hand side of~\eqref{cohom} as follows:
\begin{itemize}
\item Map $(c_A, a_A,c_B, a_B)$ to $(a_A - c_A\tau^A)^{(n)}(0)$ 
in the $n^\text{th}$ term of the first $\C^\N$ factor, and 
\item $(c_A, a_A,c_B, a_B)$ to $(a_B - c_B\tau^B)^{(n)}(0)$ 
in the $n^\text{th}$ term of the second $\C^\N$ factor; also,
\item for each non-singular Bohr-Sommerfeld leaf, passing through points 
$P_j$ and $Q_j$, map 
$(a_A, c_A, a_B, c_B)$ to $a_A(P_j) - c_A(Q_j)\tau^Z_{Q_jP_j}$
in the $\C$ component corresponding to that \BS\ leaf.
\end{itemize}
From the preceding discussion, the kernel of this map is precisely the 
set of coboundaries, as follows.
From~\S\ref{ss:gamma1}, if the cocycle is a coboundary then $a_A - c_A \tau_A$ 
is Taylor flat at the singular leaf (equation~\eqref{holcon1}).
From~\S\ref{ss:gamma2}, \eqref{holcon2}, 
we have the same for $a_B - c_B \tau^B$.
And finally, for each regular \BS\ leaf, being a coboundary requires 
that the values of the corresponding $a$ and $c\tau$ functions agree 
on that leaf.  Conversely, if all these conditions hold, the collection 
$(c_A, a_A,c_B, a_B)$ defines a coboundary.  
Thus, the kernel of this map is the set of coboundaries.

On the $\C^\N$ components, this map is the same map as was used in 
the proof of Lemma 17, 
which was shown there to be surjective, and so this map is surjective 
onto the $\C^\N$ components.
It is also surjective on the $\C$ components: 
the $(a-c\tau)^{(n)}(0)$ determine the jet of the functions at the origin, 
but not their values at any point away from the origin.  
Since $a_A, c_A$, etc.\ can be any smooth functions, it is easy to 
choose them so that $a_A(P_j) - c_A(Q_j)\tau_{P_j Q_j}$ 
has any desired value.
Thus, the map is surjective onto the $\C$ components.

Finally, if the singular leaf is \BS, it is excluded from the sum 
by~\S\ref{ss:sing-BS}.

Therefore we have a surjective map from the space of cocycles to the 
right side of~\eqref{cohom} whose kernel is the space of coboundaries, 
and so the cohomology is as claimed.
\end{proof}

\begin{rmk}
We can make the infinite-dimensional cohomology look slightly more
natural by viewing it as a graded vector space. Following the ideas
of the Arnol'd school around singularity theory (see for
example~\cite{arnold}), it is possible to define a filtration on the
sheaf $\J$ by letting $\J_k$ consist of solutions up to order $k$ of
the leafwise flat sections equation. This induces a grading on the
cohomology, so that the $\C^\N$ term has one $\C$ in each degree.
\end{rmk}

\subsection{Cohomology in other degrees}\label{s:higher-cohom}

So far we have been concerned with the cohomology in degree 1.
We now briefly dispose of the other degrees.

\begin{thm}\label{higher-cohom}
Let $(U(\lf), \omega, F)$, $\LL$, and $\J$ be as above.
Then the cohomology groups 
$H^k\bigl( U(\lf),\J\bigr)$ 
are zero for $k\neq1$.\\
\end{thm}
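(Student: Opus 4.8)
The plan is to show $H^0 = 0$ directly and $H^k = 0$ for $k \geq 2$ by using the simplicity of the chosen cover, then appeal to Section~\ref{refcov} to know this \v{C}ech computation equals the sheaf cohomology.

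First I would dispose of $H^0\bigl(U(\lf);\J\bigr)$. A global section of $\J$ over $U(\lf)$ is, by definition, a leafwise flat section on all of $U(\lf)$; restricting to any regular leaf $\gamma$ (which is a circle), the section is determined by its value at one point and its parallel transport around $\gamma$, so it can be globally defined on that leaf only if $\hol_\gamma = 1$. But the holonomy $\hol_\gamma$ varies smoothly (indeed, as noted in \S\ref{ss:pt}, through a term like $e^{-ih\beta}$ it changes with the leaf), and is non-constant on the regular leaves near the singular leaf; hence it equals $1$ on at most a discrete set of leaves, so a section flat on \emph{every} leaf must vanish on an open dense set of leaves, and by continuity vanishes identically. (This is the argument alluded to in Remark~\ref{kost}.) Thus $H^0 = 0$.

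For $k \geq 2$: the simple cover of $U(\lf)$ used throughout \S\ref{cohom-setup}--\ref{ugly} consists of just three open sets $Z$, $A$, $B$. Since $A$ and $B$ are disjoint (they meet only through $Z$; there is no triple overlap $A \cap B$ and no nonempty $A\cap B \cap Z$), the nerve of the cover has no $2$-simplices, so the \v{C}ech complex $C^\bullet(\{Z,A,B\};\J)$ vanishes in degrees $\geq 2$, giving $\check H^k = 0$ for $k \geq 2$ trivially. (If the more complicated leaf structures of \S\ref{s:manysing} are in play, one chooses the analogous cover in which no three sets have common intersection, and the same conclusion holds.) Finally, invoking the refinement argument of \S\ref{refcov} (which shows the cohomology of this simple cover computes the genuine sheaf cohomology $H^*(U(\lf);\J)$), we conclude $H^k\bigl(U(\lf);\J\bigr) = 0$ for all $k \neq 1$.

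The one step requiring a little care is the vanishing of $H^0$: strictly, one must verify that the holonomy is genuinely non-constant near the singular leaf rather than merely that it \emph{can} be nontrivial. This follows from the explicit formula for flat sections in Proposition~\ref{p:flatsecn}, where transport around a loop contributes a factor depending on $h = xy$ (the leaf variable) in a non-locally-constant way; I would spell this out by exhibiting the holonomy of a regular leaf $\gamma_1$ as the product $\tau^A_{PQ_+}\tau^Z_{QP_+}$ computed in \S\ref{ugly} and observing it is a nonconstant smooth function of the transversal coordinate. Everything else is formal: $H^0$ dies because there are no nonzero global flat sections, and $H^{\geq 2}$ dies because the cover has no higher overlaps.
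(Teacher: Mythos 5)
Your proposal is correct and follows essentially the same route as the paper's own proof: $H^0$ vanishes because a global leafwise flat section can be nonzero only on Bohr--Sommerfeld leaves, which form a discrete set, so the section is zero by continuity (your holonomy discussion just re-derives this discreteness, which the paper cites as known), and $H^{k}$ for $k\geq 2$ vanishes because the chosen cover has no triple or higher intersections, with \S\ref{refcov} upgrading the computation to the actual sheaf cohomology.
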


\begin{proof}
This is immediate.  First, $H^0\bigl( U(\lf),\J\bigr)$ 
is the set of global leafwise
flat sections of $\LL$. Any such section is zero except on the \BS\
leaves; since the set of \BS\ leaves is discrete, the entire section
must be zero by continuity, and so $H^0\bigl( U(\lf),\J\bigr)=0$. 
Higher cohomology groups are trivial because 
there are no triple or higher intersections in the cover. 
\end{proof}

\begin{rmkn}
Although we have computed the cohomology with respect to a certain cover
(and this is particularly evident here), 
we show in \S\ref{refcov} that it is isomorphic to the actual sheaf cohomology.
\end{rmkn}

\section{More than one singular point}\label{s:manysing}

Thus far, all of the calculations have been for the simplest system with
a hyperbolic singularity, given in Figure~\ref{model}.
In this section we perform the calculations for more complicated systems.

\subsection{The next simplest examples}

In the case where there is more than one hyperbolic singular point on
the same leaf, there are many different possibilities for the topology
of the leaf.
Two examples are the ``triple-eight'' with three loops
and the ``double-lung'' systems, each with two hyperbolic singularities,
shown below in Figures~\ref{triple8} and~\ref{lungs}.
Bolsinov and Fomenko in~\cite{bolsinovandfomenko}
give a classification of the possible topological types of leaves.

\begin{figure}[h]
\centerline{\includegraphics[height=3cm]{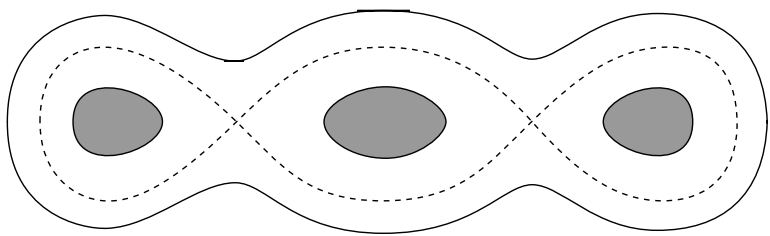}}
\caption{The ``triple-eight'' system}\label{triple8}
\end{figure}

\begin{figure}[h]
\centerline{\includegraphics[height=2in]{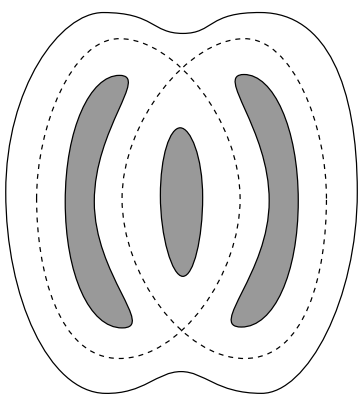}}
\caption{The ``double-lung'' system}\label{lungs}
\end{figure}

We first consider the ``triple-eight'' and
carry out the cohomology calculation for this system.
Loops around either of the two outside ``holes,'' inside the
singular leaf,
will clearly give identical calculations as in the figure-eight case,
and so we do not repeat them.
The computation for the middle loop $\gamma_4$,
shown in Figure~\ref{triple8mid}, is a bit different.

\begin{figure}[h]
\centerline{\includegraphics[height=6cm]{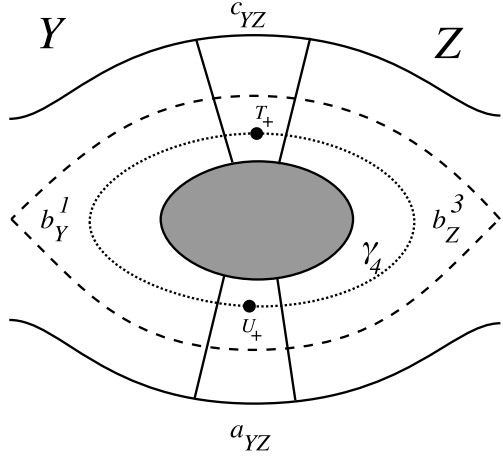}}
\caption{Closeup of the centre loop}\label{triple8mid}
\end{figure}

From $YZ$ we get the two equations
\begin{subequations}\label{YZ-coc}
\begin{align}
a_{YZ}(T_+) = b_Z^3(T) - b_Y^1(T)\label{YZ-coc1}\\
c_{YZ}(U) = b_Z^3(U) - b_Y^1(U)\label{YZ-coc2}
\end{align}
\end{subequations}
and from parallel transport we get
\begin{subequations}\label{YZ-pt}
\begin{align}
b_Z^3(U) = b_Z^3(T) \tau^Z_{TU}\label{YZ-pt1}\\
b_Y^1(T) = b_Y^1(U) \tau^Y_{UT}\label{YZ-pt2}
\end{align}
\end{subequations}
Use \eqref{YZ-pt} to write \eqref{YZ-coc2} at $T$:
\begin{equation*}
c_{YZ}(U) = b_Z^3(T) \tau^Z_{TU} - b_Y^1(T) \tau^Y_{TU}
\end{equation*}
so
\begin{equation*}
c_{YZ}(U)\tau^Y_{UT} = b_Z^3(T) \tau^Z_{TU}\tau^Y_{UT} - b_Y^1(T)
\end{equation*}
Subtracting this from \eqref{YZ-coc1}, we get
\begin{equation}\label{gamma4bohrs}
\begin{split}
a_{YZ}(T) - c_{YZ}(U)\tau^Y_{UT}
&= b_Z^3(T) - b_Y^1(T)
- b_Z^3(T) \tau^Z_{TU}\tau^Y_{UT} + b_Y^1(T) \\
&= b_Z^3(T) \bigl( 1-\hol_{\gamma_4}\bigr)
\end{split}
\end{equation}

A similar procedure gives us a solution for $b^1_Y$ in terms of
$a_{YZ}$ and $c_{YZ}$, also involving a holonomy term.
These are the familiar equations involving holonomy, which give us the
contribution due to a Bohr-Sommerfeld leaf.

However, the more interesting part is the contribution coming from
flat functions, for which we don't even need the calculation leading
to~\eqref{gamma4bohrs}, but we can see directly from~\eqref{YZ-coc}.
Since $b_Y^1$ and $b_Z^3$ are both Taylor flat at the singular leaf,
the right-hand sides of~\eqref{YZ-coc1} and~\eqref{YZ-coc2} both
vanish to infinite order at the singular leaf, and so in order
for~\eqref{YZ-coc} to have a solution, it is necessary that
\emph{both} $a_{YZ}$ and $c_{YZ}$ vanish to infinite order at the
singular leaf as well. Thus, this piece gives a contribution to the
cohomology that looks like
\[ \{ \text{smooth functions} \}^2 / \{ a \approx 0,\, c\approx 0 \} \]
namely, $(\C^\N)^2$.
Together with the two contributions coming from the loops around the
outer ``holes,'' each of which will be one contribution of $\C^\N$,
we see that there are a total of \emph{four} $\C^\N$ contributions
from the pair of singularities.

We leave as an exercise for the reader to set up and carry out
the computations for the
``double lung'' system in Figure~\ref{lungs},
and show that it also has four $\C^\N$ components in the cohomology.

\subsection{The general case}\label{ss:generalcase}

Here we show that, in general, we get two $\C^\N$ contributions to the
cohomology for each hyperbolic singular point.

Consider a covering of a neighbourhood of the singular leaf
by overlapping rectangles together
with hyperbolic crosses, as illustrated in Figure~\ref{leafcov}
(and as used in \S\ref{refcov}).
Near a hyperbolic singular point, the system looks like
Figure~\ref{hypcov}.

\begin{figure}[h]
\centerline{\includegraphics{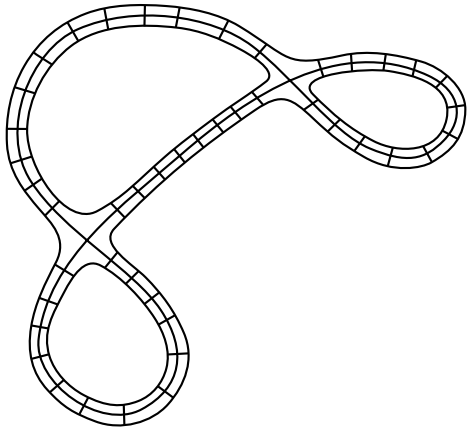}}
\caption{The covering of the leaf}\label{leafcov}
\end{figure}

\begin{figure}[h]
\centerline{\includegraphics{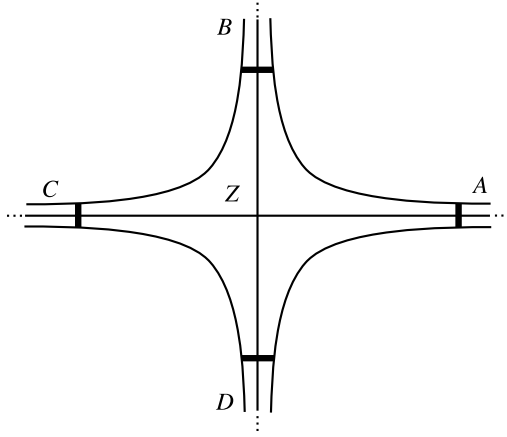}}
\caption{The covering near one hyperbolic singular point}\label{hypcov}
\end{figure}

Consider the part of the leaf passing through the
set labelled $A$ in Figure~\ref{hypcov}.
If we continue along this leaf, we will pass through a number of other
rectangles, each with their own functions defined on them and on the
corresponding intersections, and eventually reach another hyperbolic
cross (possibly the same one on a different branch).
See Figure~\ref{crosstocross}, where the $a$'s denote elements of $\J$
on double intersections (part of a 1-cochain),
and the $b$'s denote elements on the sets (part of a 0-cochain).

\begin{figure}[h]
\centerline{\includegraphics{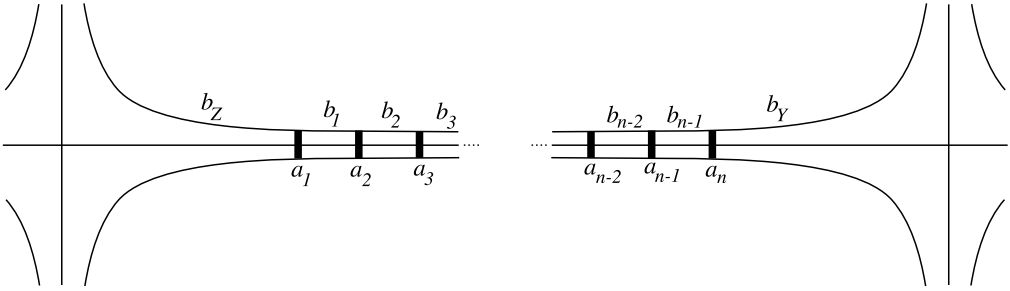}}
\caption{Leaf between two singular points}
\label{crosstocross}
\end{figure}

If we look at the coboundary conditions for this part of the picture,
we will get a system of equations like
\begin{equation}\label{longcohom}
\begin{split}
a_1 &= b_1 - b_Z\\
a_2 &= b_2 - b_1\\
&\vdots\\
a_n &= b_Y - b_{n-1}
\end{split}
\end{equation}
(where for simplicity we have omitted the terms giving parallel transport).
Adding up all these equations gives
\begin{equation}
a_1 + a_2 + \cdots + a_n = b_Y - b_Z
\end{equation}
which, since $b_Y$ and $b_Z$ must be analytically flat,
shows that the sum of the $a_j$'s must be analytically flat.

The part of the cohomology coming from this part of the picture
will therefore have a term of the form
\begin{equation}
\frac{C^\infty(I)^{\oplus n}}{\{ a_1 + \cdots + a_n \approx 0 \} },
\end{equation}
which is isomorphic to $\C^\N$ by a similar argument as in the proof
of Lemma~\ref{CN}. Thus, the $H^1$ cohomology will have one
term of the form $\C^\N$ coming from this part of the singular leaf.

This will be true for each arc connecting two singularities in the
singular leaf, and these conditions will be independent of each other.
Since there are twice as many such arcs as singular points
(four emitting from each point, each of which gets counted twice this way),
there are two $\C^\N$ contributions in per singularity.

For the same reason as in the proof of Theorem~\ref{higher-cohom} 
(namely that the covering has no triple or higher intersections), the 
higher cohomology groups are zero.  
From~\cite{mhthesis}, we have a Mayer-Vietoris principle for this 
cohomology (see Propositions 3.4.2 and 6.3.1).  
Putting together the results of this section 
with the results from~\cite{sniatpaper}
and~\cite{mhthesis} (which give the regular and elliptic cases, respectively),
and patching together with Mayer-Vietoris, we obtain the following:

\begin{thm}\label{mainthm}\label{thmn:surfaces}
Let $(M, \omega, F)$ be a two-dimensional, compact, completely integrable
system, whose moment map has only nondegenerate singularities.
Suppose $M$ has a prequantum line bundle $\LL$,
and let $\J$ be the sheaf of sections of $\LL$ flat along the leaves.
The cohomology $H^1(M,\J)$ has two
contributions of the form $\C^\N$ for each hyperbolic singularity,
each one corresponding to a space of Taylor series in one complex variable,
and one $\C$ term for each non-singular Bohr-Sommerfeld leaf.
That is,
\begin{equation}\label{eq:mainthm}
H^1 (M;\J) \cong \bigoplus_{p \in \mathcal{H}}
\bigl( \C^\N \oplus \C^\N\bigr)
\oplus \bigoplus_{b\in BS} \C_b .
\end{equation}
The cohomology in other degrees is zero.

Thus in particular,
the quantization of $M$
is given by~\eqref{eq:mainthm}.
\end{thm}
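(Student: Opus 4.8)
The plan is to assemble the global statement for $H^1(M;\J)$ from local computations via a Mayer--Vietoris argument, exactly as in the regular and elliptic cases. First I would choose a good open cover of the compact surface $M$: take a tubular neighbourhood $U(\lf)$ of each singular leaf $\lf$ (each of the type analyzed in \S\ref{s:manysing}, i.e.\ built from rectangles and hyperbolic crosses as in Figure~\ref{leafcov}), and complete these to a cover of $M$ by adding neighbourhoods of the regular leaves. The regular part of $M$ fibres over intervals in the leaf space with compact fibres, so \'Sniatycki's theorem~\cite{sniatpaper} applies there; the elliptic singular leaves are handled by~\cite{mhthesis}; and the hyperbolic singular leaves are handled by Theorem~\ref{thm:summary} together with the general-position discussion in \S\ref{ss:generalcase}. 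In each case $H^0$ vanishes (no global leafwise flat sections, since the \BS\ set is discrete and sections vanish off it, cf.\ Theorem~\ref{higher-cohom}) and $H^k=0$ for $k\geq 2$ (no triple intersections in the chosen covers).

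Next I would run the Mayer--Vietoris sequence, invoking the principle established in~\cite{mhthesis} (Propositions 3.4.2 and 6.3.1). Because $H^0$ of every piece is zero, the connecting maps impose no relations coming from $H^0$, and because $H^k$ of every piece vanishes for $k \geq 2$, the sequence collapses to show that $H^1(M;\J)$ is simply the direct sum of the $H^1$'s of the pieces, and that $H^k(M;\J)=0$ for $k \neq 1$. (One should check that the overlaps between a singular neighbourhood and an adjacent regular neighbourhood contribute nothing new: the \BS\ leaves in the overlap are already counted once, and the flat-function conditions live only at the singular leaf itself, so no double-counting occurs. This is really the content of the bookkeeping in \S\ref{ss:main} and \S\ref{ss:generalcase}.) Summing the local answers: each hyperbolic point $p\in\mathcal H$ contributes $\C^\N\oplus\C^\N$ (two arcs of the singular leaf emanate from each point once each, since four emanate counted with multiplicity two), each elliptic point contributes nothing, and each non-singular \BS\ leaf $b$ contributes one $\C_b$, whether it lies in a regular region or inside a singular neighbourhood (by the $\gamma_3$ discussion, inside and outside \BS\ leaves contribute equally). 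This yields precisely~\eqref{eq:mainthm}, and $\mathcal Q(M)=\bigoplus_k H^k(M;\J)=H^1(M;\J)$ then follows from Definition~\ref{d:qn}.

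The main obstacle I anticipate is not any single estimate but the organizational task of making the Mayer--Vietoris patching rigorous across \emph{all} topological types of singular leaves simultaneously: Bolsinov--Fomenko~\cite{bolsinovandfomenko} list many possibilities, and one must be confident that the arc-counting argument of \S\ref{ss:generalcase} (two $\C^\N$ per hyperbolic point, independence of the conditions for distinct arcs) is genuinely uniform and that no subtle interaction arises when several singular leaves are close together or when a leaf has a complicated combinatorial structure. A secondary technical point, acknowledged in the paper, is that the local computations were carried out with a specific simple cover; reconciling this with actual sheaf cohomology requires the refinement argument of \S\ref{refcov}, which one should cite at this stage. Once those two points are granted, the proof of Theorem~\ref{mainthm} is essentially a one-paragraph assembly: cover, vanish the extraneous degrees, apply Mayer--Vietoris, and add up the local contributions.
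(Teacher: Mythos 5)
Your proposal follows essentially the same route as the paper: the local pieces are handled by \'Sniatycki's theorem, \cite{mhthesis} for elliptic leaves, and the arc-counting of \S\ref{ss:generalcase} (two $\C^\N$ per hyperbolic point) for hyperbolic leaves, then assembled with the Mayer--Vietoris principle of \cite{mhthesis}, with vanishing in other degrees and the refinement argument of \S\ref{refcov} cited exactly as the paper does. This matches the paper's own proof, so no further comment is needed.
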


\begin{rmkn}
So far, we have only shown the above for cohomology computed
with respect to the particular coverings used in the computations,
but we prove below in~\S\ref{refcov} that this is isomorphic to 
the actual sheaf cohomology.
\end{rmkn}

\section{ Dependence on polarizations}\label{s:hopf}

The theorem above establishes a strong dependence of the
quantization of an integrable system on a surface
on the singularities of the function
determining the integrable system. In particular, if
we can find examples of  integrable systems on the same surface with
different kinds of singularities, Theorem~\ref{mainthm}
would show that this notion of
quantization depends strongly on the polarization considered.

\subsection{Two examples from Mechanics}

In this section we give two examples which
show up naturally in mechanics and then we give a method to
construct general examples of surfaces with prescribed number of
hyperbolic singularities.

\gap

\noindent{\bf{Example 1: Rotations on the sphere}}

Consider the height function $h$ on the 2-sphere of integer height
$k$  together with its standard area form. The Hamiltonian vector
field of the function $h$ is the vector field given by
rotations along the central axis.

As described in \cite{mhthesis} Chapter 5, this system has $k-1$
non-singular Bohr-Sommerfeld leaves, corresponding to the circles with integer
height. A picture of the integrable system with the Bohr-Sommerfeld
leaves marked on it for $k=4$ is shown in Figure~\ref{height}.

\begin{center}
\begin{figure}[h0]
\centerline{\includegraphics{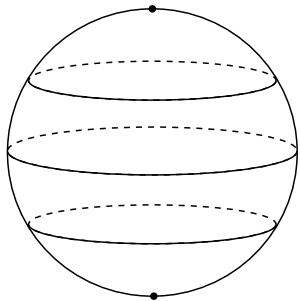}} \caption{The height function
on a sphere}\label{height}
\end{figure}
\end{center}

According to theorem \ref{thmn:surfaces}, the dimension of the
quantization for this integrable system is just given by the regular
Bohr-Sommerfeld leaves, which in this case is $k-1$. The elliptic
singularities (north and south poles) do not contribute.

\gap

\noindent{\bf Example 2: Euler's equations restricted to a sphere}

Consider Euler's equations of the rigid body on $T^*(SO(3))$ and
consider the lifted action of $SO(3)$. These equations correspond to
the movement of the Euler top (a rigid body moving around its center
of mass) which has configuration space $SO(3)$. Using symplectic
reduction by the lifted action of $SO(3)$ we obtain a Hamiltonian
system on $S^2$.
 The topology and geometry of the
induced system on the symplectic reduced space  is well-known; see
for example Cushman and Bates \cite{batesandcushman} for details.
 In section III.4, they show that this system has 
two hyperbolic singularities and four elliptic 
singularities.

A picture of the integrable system is given in Figure~\ref{euler}.

\begin{center}
\begin{figure}[h4]
\centerline{\includegraphics{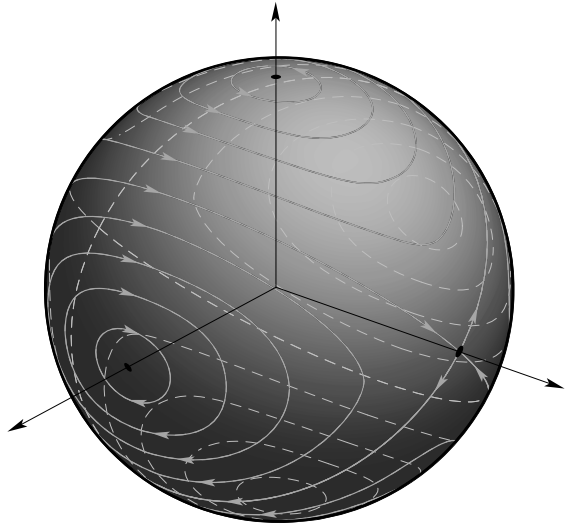}} \caption{Reduced Hamiltonian
flow of Euler's equations}\label{euler}
\end{figure}
\end{center}

Using the recipe given in Theorem \ref{thmn:surfaces}, the
quantization of this system is

\begin{equation*}
\mathcal{Q}(M) = H^1 (M;\J) \cong \bigoplus_ {p\in \mathcal{H}}
(\C_p^\N)^2 \oplus \bigoplus_{b\in BS} \C_b .
\end{equation*}

Since the hyperbolic set has two elements, this cohomology group has
four infinite-dimensional contributions.  If we compare this
example to the previous one (in which the quantization is
finite-dimensional), we can conclude that this quantization of the
sphere strongly depends on the polarization when we allow
singularities.

\subsection{Surgery of integrable systems}
Indeed, we can perform surgery of integrable systems to include as
many hyperbolic singularities into the picture as the Euler
characteristic allows. We briefly present this method in this small
subsection, for the sake of completeness.
Though the construction might seem elementary, such an explicit
description is not detailed in the literature of
integrable systems.

Given a function on a compact orientable surface $f\colon
S\longrightarrow \mathbb R$ with non-degenerate singularities (a
Morse function), consider the Hamiltonian vector field $X_f$
associated to this function. It is well-known (see for instance,
\cite{milnor}) that the number of elliptic and hyperbolic
singularities of this vector field on a surface is related to the
Euler characteristic via the Poincar\'{e}-Hopf formula:
\begin{equation}\label{morse}
\chi(S)=\text{number of elliptic singularities} - \text{number of
hyperbolic singularities}
\end{equation}

In the case of compact orientable surfaces, we can find examples of
integrable systems on them with any numbers $s_e$ of elliptic
and $s_h$ hyperbolic
singularities greater than for the height function and
satisfying~\eqref{morse}.
These examples can be created via surgery of integrable systems, 
adding cylinders with one elliptic and
one hyperbolic singularity and therefore increasing by one the
number of each type of singularity at each step.
Bolsinov and Fomenko \cite{bolsinovandfomenko} have developed a
whole Morse theory for integrable systems of singularities
with special attention to the cases of surfaces.

We denote $s_e^0$ and $s_h^0$  the total number of
elliptic and hyperbolic singularities given by the height function
on the compact surface.

The method has the following steps, which we illustrate on the sphere
in the figures below.\footnote{We wish to thank
Alexey Bolsinov for clarifying this procedure to us
in Oberwolfach during the finishing stages of work on
this paper.}

\gap

\noindent{\bf Step 1:} Start with the height function $h$ on a given
compact surface. Cut out a cylinder containing only regular levels
following the level sets of the height function $h$. The upper and
lower border of the cylinder are level sets of $h$.
(Figure~\ref{cylcut})

\begin{figure}
\centerline{  \includegraphics[scale=0.3]{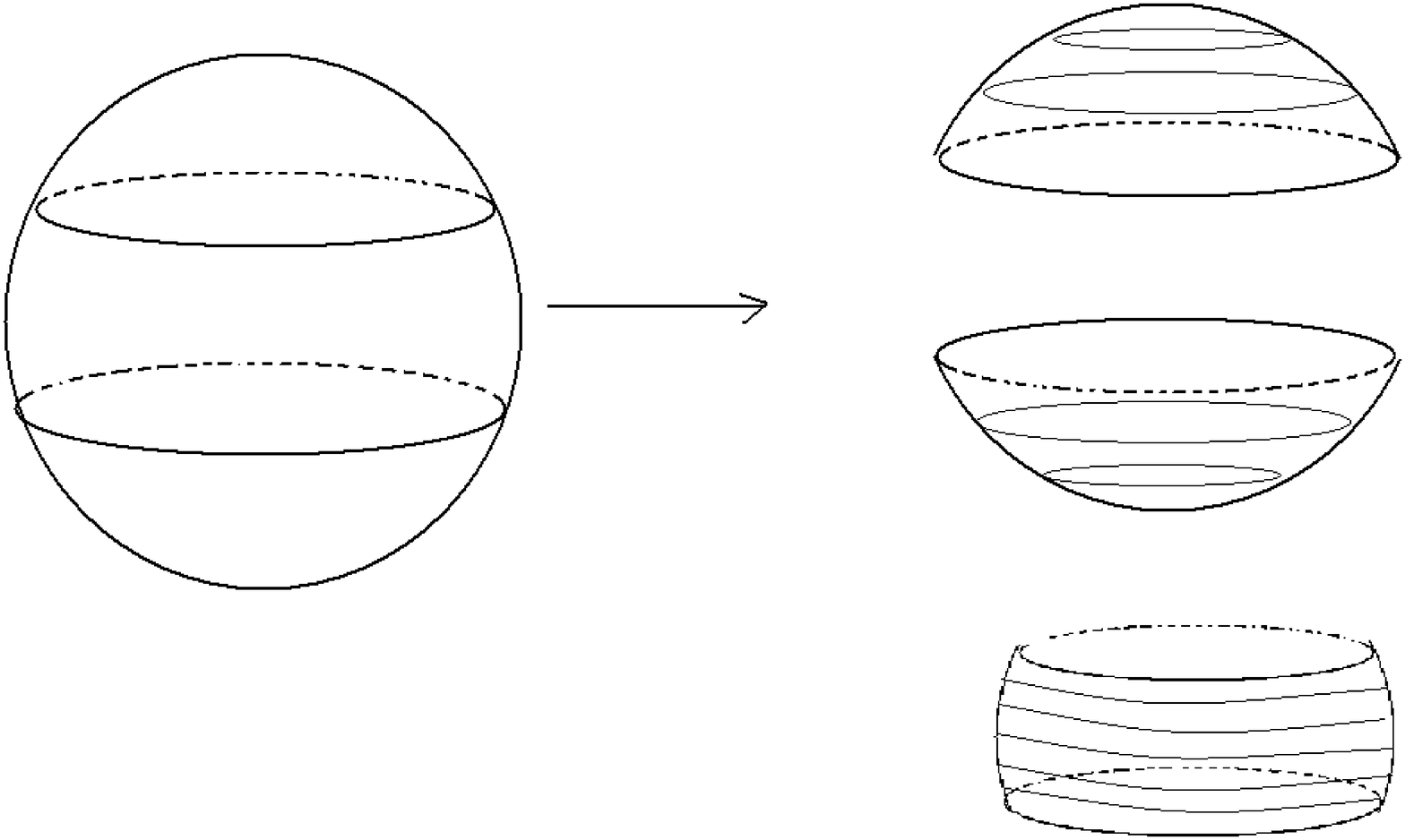} }
\caption{Cutting out a cylinder.}\label{cylcut}
\end{figure}

\gap

\noindent{\bf Step 2:} Leaving the foliation by level sets of $h$
the same on the complement of the cylinder, change the function
inside the cylinder (which is regular) in such a way as to create a
hyperbolic singularity and simultaneously an elliptic singularity
inside the cylinder.  See Figure~\ref{singcyl}.

\begin{figure}
\centerline{\includegraphics[scale=0.35]{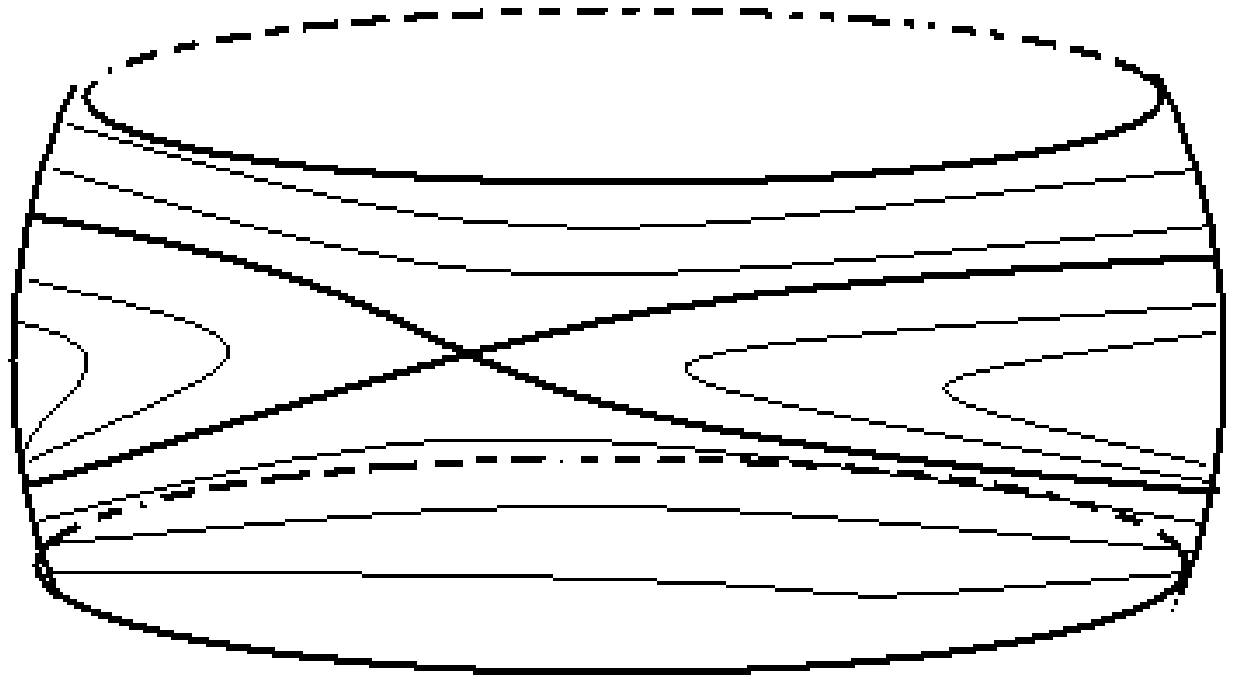} \quad
\quad \quad \includegraphics[scale=0.35]{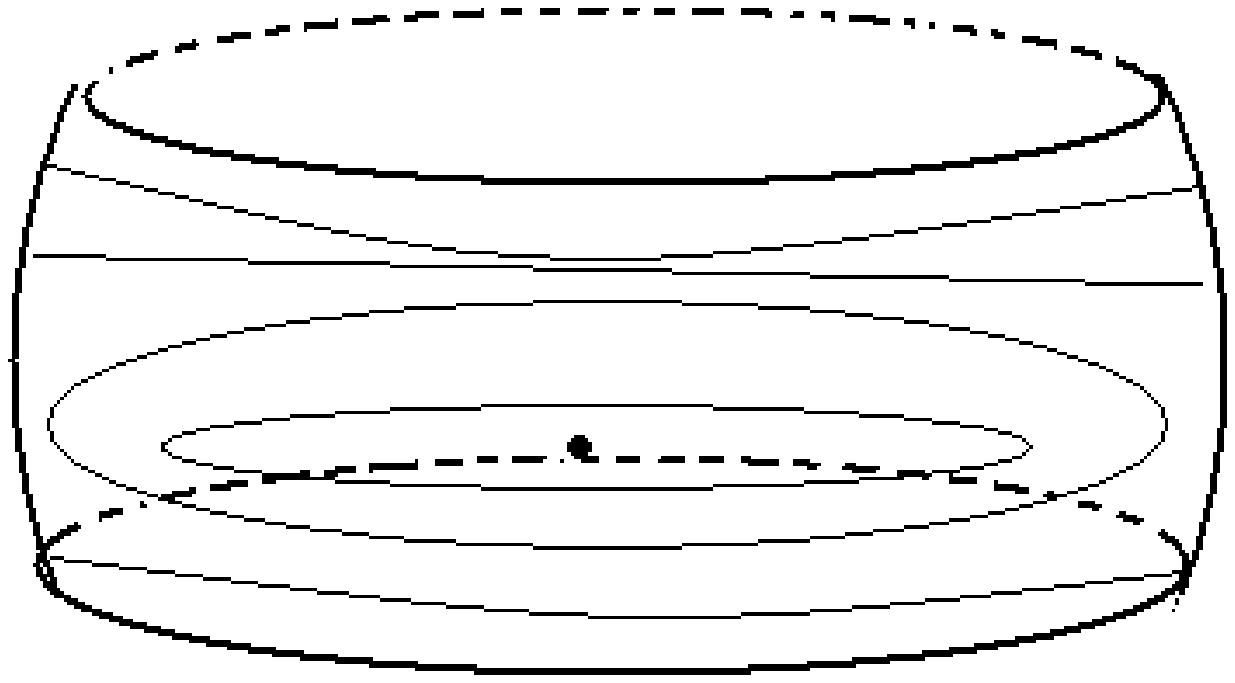}}
\caption{Hyperbolic singularity inside the cylinder. (Front and rear view)}
\label{singcyl}
\end{figure}

\gap

\noindent{\bf Step 3:} Glue the cylinder back into the surface. This
gives an example of a ``modified'' integrable system with one more
elliptic and one more hyperbolic singularity than we started with.

\gap

Finally, given $s_e$ and $s_h$
such that $\chi(S)= s_e-s_h$ and $s_e\geq s_e^0$ and $s_h\geq s_h^0$,
by repeating this process we can obtain an
example of an integrable system on a compact surface with exactly
$s_e$ elliptic singularities and $s_h$ hyperbolic singularities.

For these systems, we
can apply the main recipe of theorem \ref{thmn:surfaces} to get the
following result:

\begin{prop} The quantization of the integrable system constructed above via
integrable surgery on a compact orientable surface with Euler
characteristic $\chi(S)=s_e-s_h$ and exactly $s_e$ elliptic and
$s_h$ hyperbolic singularities  such that  $s_e\geq s_e^0$ and
$s_h\geq s_h^0$ is given by the formula:
\[ H^1 (M;\J) \cong \mathbb (C^\N)^{2s_h} \oplus
\bigoplus_{b\in BS} \C_b . \]
\end{prop}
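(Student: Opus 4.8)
The plan is to invoke the main machinery already developed in the paper and simply bookkeep the count of contributions. First I would observe that the integrable system constructed via the surgery in Steps 1--3 is still a two-dimensional compact completely integrable system whose moment map has only nondegenerate singularities: the surgery only modifies the function inside a cylinder of regular levels, creating one elliptic and one hyperbolic nondegenerate singularity, and leaves the foliation unchanged elsewhere. Hence Theorem~\ref{mainthm} applies verbatim to $(M,\omega,F)$. That theorem already gives
\[
H^1(M;\J) \cong \bigoplus_{p\in\mathcal H}\bigl(\C^\N\oplus\C^\N\bigr)
\oplus \bigoplus_{b\in BS}\C_b,
\]
with cohomology vanishing in all other degrees.

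Next I would identify the hyperbolic set $\mathcal H$ for the surgered system. By construction, each application of Steps 1--3 increases the number of hyperbolic singularities by exactly one, so after iterating we have $\abs{\mathcal H}=s_h$. Since each hyperbolic point contributes precisely two factors of $\C^\N$ by Theorem~\ref{mainthm}, the hyperbolic part of the cohomology is $\bigl(\C^\N\bigr)^{2s_h}$. The Bohr-Sommerfeld leaves that are non-singular contribute one factor of $\C$ each, giving the term $\bigoplus_{b\in BS}\C_b$, exactly as in the statement. The elliptic singularities (there are $s_e$ of them, satisfying $\chi(S)=s_e-s_h$ by the Poincar\'e-Hopf formula~\eqref{morse}) make no contribution, as already noted for the height function and established in~\cite{mhthesis}. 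Assembling these pieces yields
\[
H^1(M;\J)\cong (\C^\N)^{2s_h}\oplus\bigoplus_{b\in BS}\C_b,
\]
which is the claimed formula.

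Strictly speaking there is essentially no obstacle here: the proposition is a direct corollary of Theorem~\ref{mainthm} together with the combinatorics of the surgery construction. The only point requiring a word of care is verifying that the surgery genuinely produces \emph{nondegenerate} elliptic and hyperbolic singularities (so that the Eliasson normal form of Theorem~\ref{eliasson-coords}, and hence all the local computations of \S\ref{s:flatsec} and \S\ref{ss:generalcase}, apply). This follows because inside the cylinder one may explicitly prescribe the modified function to agree, near the new singular points, with the local models $Q=xy$ and $Q=x^2+y^2$, and interpolate smoothly to the original height function near the boundary of the cylinder where everything is regular. Granting this, the proof is simply a matter of counting, and I would present it as a short paragraph citing Theorem~\ref{mainthm} and the step-by-step description of the surgery above.
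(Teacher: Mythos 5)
Your proposal is correct and follows essentially the same route as the paper: the proposition is stated there as a direct application of Theorem~\ref{mainthm} to the surgered system, counting two $\C^\N$ factors per hyperbolic point and one $\C$ per non-singular Bohr-Sommerfeld leaf. Your extra remark about checking that the surgery produces genuinely nondegenerate singularities (so the Eliasson normal form applies) is a reasonable point of care that the paper leaves implicit in its construction.
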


This shows that this quantization of any compact surface strongly
depends on the polarization when we allow polarizations with
singularities.

\section{Refinements and coverings}\label{refcov}

In this somewhat technical section we show that the cohomology computed in
sections~\ref{cohom-setup}~--~\ref{s:manysing}
is the actual sheaf cohomology.
We use the methods and terminology of~\cite{mhthesis},
especially \S3.4 and 3.5.  We review these briefly and refer the reader
there for more details.

Let $M$ be a compact 2-dimensional prequantized integrable system, as
usual.
Recall that sheaf cohomology is defined as the direct limit, over
all open coverings of $M$, of the cohomology computed
with respect to the cover.
In order to show that the cohomology we have computed in~\S\ref{ugly} is the
actual sheaf cohomology, we show that every open covering has a refinement
whose cohomology is isomorphic to that found in~\S\ref{ugly}.
For simplicity, we assume that $M$ has only one leaf with hyperbolic
singularities; the extension to the case of several such leaves is
reasonably straightforward.

We copy from~\cite{mhthesis} the following definition.
We assume we have a given set of coordinates
(which will usually be action-angle coordinates),
which we call  $(t,\theta)$.
\begin{defn}
A \define{brick wall} cover of a $t$-$\theta$ rectangle
is a finite covering by open $t$-$\theta$ rectangles (``bricks''),
satisfying the following properties:
\begin{itemize}
\item The rectangles can be partitioned into sets (``layers'') so that
all rectangles in one layer cover the same interval of $t$ values
(``All bricks in the same layer have the same height'');
\item Each brick contains points that are not in any other brick; and
\item There are no worse than triple intersections, i.e., the
intersection of two bricks in one layer does not meet the intersection
of two bricks in either of the two adjoining layers.
\end{itemize}
\begin{figure}[htbp]
\centerline{\includegraphics[width=1.5in,height=1in]{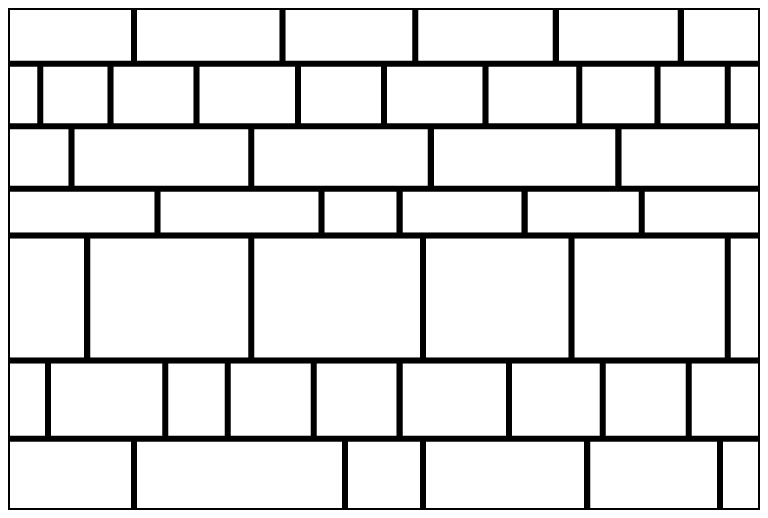}
\qquad \qquad \includegraphics[width=1.5in,height=1in]{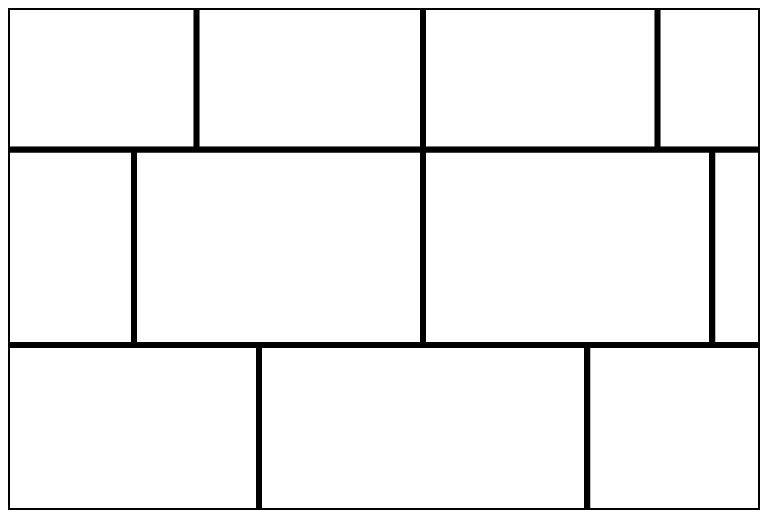}}
\caption{A brick wall cover, and one which is not}\label{brickfig}
\end{figure}
Note that we do not require that the number of bricks be the same in each
layer, nor that the layers have the same height, nor that the bricks within
one layer have the same width.
See Figure~\ref{brickfig}, where thick lines indicate intersections.
The definition extends in the obvious way to cylinders, where we identify
$\theta=0$ and $\theta=2\pi$.
\end{defn}

Let $\mathfrak{U}$ be an open covering of $M$.
By Lebesgue's number lemma, there is some number $\delta$ such that
any set of diameter less than $\delta$ is contained entirely in
some set $U \in \U$.

Let $L_\delta$ be a ``fattening'' of the singular leaf,
a neighbourhood of the singular leaf which is the union of leaves
of width $\delta/2$.  Cover $L_\delta$ by rectangles of width $\delta/2$,
together with a small ``hyperbolic cross'' at the singularity that
also has diameter less than $\delta$.
Then let the open covering $\V$ be the collection of these rectangles,
together with a brick wall covering of $M\smallsetminus L_\delta$ with
bricks of diameter less than $\delta$.
Then $\V$ is a refinement of $\U$.

Now we show that the \v Cech cohomology of $M$ calculated with respect
to $\V$ is the same as we found in~\ref{ugly}.

Let $E$ be the union of all layers of bricks which do not meet
the singular leaf.
Let $A\subset L_\delta$
be an open union of leaves around the singular leaf which does not
intersect $E$ and which does not contain any Bohr-Sommerfeld leaf
other than possibly the singular leaf.
(This is possible by the discreteness of Bohr-Sommerfeld leaves.)
Let $B$ be an open union of regular leaves such that $A\cup B = M$.
Then the covering $\V$ induces a covering on $A$ and $B$ which is a
brick wall covering on $B$, and on $A$ has the same form as shown 
in Figure~\ref{leafcov}.

(The point of this construction is the following: 
$A$ meets only one layer of bricks,
namely the ones covering the singular leaf.
$B$ is an open union of leaves which, together with $A$, covers $M$.
We have chosen $A$ and $B$ so that all intersections between layers
of bricks happen \emph{outside} of $A$.
This means that the covering induced on $A$ has no triple intersections, 
and we can apply the results of~\S\ref{ss:generalcase}.
Since we have from~\cite{mhthesis} a Mayer-Vietoris for 
unions of regular leaves, and since $A\cap B$ consists only of 
regular leaves, we can apply Mayer-Vietoris to $A$ and $B$.
Thus we avoid having to calculate with a covering of $A$ with more 
``layers of bricks,'' and thus avoid triple intersections.)

By the assumption that $A$ contains no Bohr-Sommerfeld leaf, the
cohomology of $A$ with respect to the covering induced by $\V$ is
$\C^{2\N}$ in degree 1, and zero otherwise.
Since $B$ is regular, the results of~\cite{mhthesis} apply, and the
cohomology of $B$ with respect to the covering induced by $\V$
has one dimension for every (nonsingular) Bohr-Sommerfeld leaf.

By Mayer-Vietoris, $H^*_\V(M) \cong H^*_\V(A) \oplus H^*_\V(B)$
since $A\cap B$ is regular and has no Bohr-Sommerfeld leaves.

Therefore, the cohomology of $M$ calculated with respect to the covering
$\V$ is the same as that calculated with respect to the covering
in~\S\ref{cohom-setup}.

Since every open covering $\U$ has a refinement of the form $\V$, 
we have computed the actual sheaf cohomology of $M$.

\end{document}